\newtheorem{theorem}                 {Theorem}      [section]
\newtheorem{corollary}    [theorem]  {Corollary}
\newtheorem{lemma}        [theorem]  {Lemma}
\theoremstyle{definition}
\newtheorem{example}      [theorem]  {Example}
\newtheorem{remark}       [theorem]  {Remark}
\newtheorem{definition}   [theorem]  {Definition}
\numberwithin{equation}{section}
\let\oldmarginpar\marginpar
\renewcommand\marginpar[1]{\ \oldmarginpar[\raggedleft\footnotesize #1]{\raggedright\footnotesize #1}}
\renewcommand\arraycolsep{1pt}
\def \U{\text{\bf U}}
\def \fieldC{\mathbb{C}}
\def \d{\mathrm{d}}
\def \equi{\Leftrightarrow}
\def \impl{\Rightarrow}
\begin{document}

\title{A note on the spectral deformation of harmonic maps from the two-sphere into the unitary group}

\author{Maria João Ferreira}
\author{Bruno Ascenso Simões}

\address{\textit{MJF}, \textit{BAS}: Departamento de Matemática, Faculdade de Ciências, Universidade de Lisboa, Campo Grande, 1749-016 Lisboa, Portugal, and Centro de Matemática e Aplicações Fundamentais, Universidade de Lisboa, Av.\ Prof.\ Gama Pinto 2\\1649-003, Lisbon, Portugal}

\email{mjferr@ptmat.fc.ul.pt; bsimoes@ptmat.fc.ul.pt}

\thanks{This work was partially supported by Fundação para a Ciência e Tecnologia, PORTUGAL.}

\begin{abstract}
In \cite{FerreiraSimoesWood:09}, together with J. C. Wood, the authors gave a completely explicit formula for all harmonic maps from $2$-spheres to the unitary group $U(n)$ in terms of freely chosen meromorphic functions on $S^2$. The simplest harmonic maps are the isotropic ones. Using Morse theory Burstall and Guest \cite{BurstallGuest:97} showed that the harmonic maps come in clusters labeled by the isotropic ones. In this work, using the formula for harmonic maps aforementioned, we describe explicitly this procedure, showing how all harmonic maps can be built from the isotropic ones.
\end{abstract}

\subjclass[2000]{Primary 58E20, Secondary 53C43}
\keywords{harmonic map; uniton; Grassmannian}

\maketitle

\thispagestyle{empty}

\section*{Introduction}

A harmonic map $\varphi : M \rightarrow N$ between two Riemannian manifolds is a critical point of a natural Energy functional given by $E(\varphi) = \int_M \| d \varphi \|^2 v_M$, where $\| d \varphi \| ^2$ denotes the  square norm of differential of $\varphi$ and $v_M$ the volume element of $M$. Its Euler-Lagrange equation is a second order elliptic partial differential equation of divergence type. When the target manifold $N$ is a Lie group $G$ the harmonic map equation is one example of an ``integrable'' partial differential equation, that is a differential equation which allows a zero-curvature formulation. In this setting the harmonic map equation allows a spectral deformation, that is, a one-parameter deformation depending on a parameter $\lambda \in S^1$ (the extended solution) \cite{Uhlenbeck:89}. This means that the given harmonic map may be regarded as a map into the loop group $\Omega G$ of based loops $\gamma:S^1\rightarrow G$ ($\gamma (1) = Id$). Therefore such an extended solution admits a Fourier expansion. When this Fourier series has finitely many terms one says that the corresponding harmonic map has finite uniton number. This article deals with harmonic maps of finite uniton number, from a Riemann surface $M$ into the unitary group $U(n)$. For such a map, Uhlenbeck showed that the extended solution, and so the harmonic map, admits a factorization into linear factors -- the so called unitons. This construction gives a systematic procedure of obtaining the harmonic map by starting with a constant map and adding successive factors. J.~C. Wood and the authors exploited this point of view and showed that all harmonic maps of finite uniton number can be explicitly written in terms of known meromorphic functions \cite{FerreiraSimoesWood:09}. Since the complex Grassmannian $G_{*}(\mathbb{C}^n)$ can be totally geodesically immersed in its group of isometries, this procedure may be refined to obtain a correspondent explicit description of all harmonic maps from a Riemann surface into $G_{*}(\mathbb{C}^n)$ with finite uniton number \cite{FerreiraSimoes:12}.

We remark that $\Omega U(n) = \Omega _{alg} U(n)$, the subspace of all algebraic loops with finite Fourier expansion. Using Morse theory, Burstall and Guest classified these harmonic maps \cite{BurstallGuest:97}. As it is well known the energy functional $E:\Omega U(n) \rightarrow \mathbb{R}$ is a Morse-Bott function and its critical points are precisely the homomorphisms $S^1 \rightarrow U(n)$. They come in conjugate classes $\Omega _{\gamma }= \left \{ g\gamma g^{-1}: g \in U(n) \right \}$, where $\gamma $ is a closed geodesic in $U(n)$ passing through the identity $\text{Id}$. They are the fixed points of the action of $S^1$ on $\Omega U(n)$ given by $(\mu \gamma)(\lambda )= \gamma (\mu \lambda )\gamma (\mu )^{-1}$ ($\gamma \in \Omega U(n)$, $\mu \in S^1$). Under the gradient flow of $E$ the extended solution is deformed to a simpler one, i. e. one taking values in a critical manifold. Therefore, starting with a harmonic map $\varphi: M \rightarrow U(n)$ and following the flow line of $\nabla E$ one obtains a one parameter family $(\varphi_t)_{t \in \left ]0,1 \right ]}$ of harmonic maps whose end point $\lim_{t \rightarrow 0} \varphi_t$ is a harmonic map whose extended solution is $S^1$ invariant. The purpose of this paper is to give an explicit description of the one parameter family $\varphi_t$ in terms of the meromorphic data used \cite{FerreiraSimoesWood:09} to built the initial harmonic map $\varphi$.

The paper is organized as follows: Section \ref{Section:HarmonicMapsAndExtendedSolutions} is a preliminar section where we introduce the basic material on harmonic maps and extended solutions. In Section \ref{Section:ExplicitFormulaeForHarmonicMapsIntoUn} we review the explicit formulas describing all harmonic maps of finite uniton number from a Riemann surface into the unitary group in terms of freely chosen meromorphic functions and their derivatives, presented in \cite{FerreiraSimoesWood:09}, \cite{FerreiraSimoes:12}. The main results of these paper are presented in Section \ref{Section:StatementOfMainResult} and their complete proofs in Section \ref{Section:ProofOfMainResult}.


\section{Harmonic maps and extended solutions}\label{Section:HarmonicMapsAndExtendedSolutions}


Let $M^2$ be a Riemann surface, $U(n)$ the unitary group equipped with its bi-invariant metric induced, via left translation, by the inner product $(A,B)=\text{tr} \,AB^*$ on its Lie algebra $\mathfrak{G} = \mathfrak{U(n)}$ of skew-hermitian matrices.

A smooth map $\varphi : M \rightarrow U(n)$ is harmonic if it is a critical point of the Euler-Lagrange (energy) functional $\varphi \rightarrow \int_{M} \|d \varphi\|^2 v_M $, where $d \varphi$ is the differential of the map $\varphi$ and $v_M$ is the volume element on $M$ induced by its metric.

For convenience we choose a local complex coordinate $z$ on an open subset of $M$; our theory will be independent of that choice.

The Euler-Lagrange equation for the energy functional is the following partial differential equation:
\begin{equation}\label{Equation:EulerLagrangeEquation}
(\varphi^{-1} \varphi_{\overline{z}})_z + (\varphi^{-1}\varphi_z)_{\overline{z}} = 0,
\end{equation}
equivalently
\begin{equation*}\label{Equation:ReformulationOfEulerLagrangeEquation}
d^{\star}A = 0,
\end{equation*}
where $A=\textstyle{\frac{1}{2}}\varphi^{-1}\d\varphi$ is the pull-back of the Maurer-Cartan form on $U(n)$. We let $A_z$ and $A_{\overline{z}}$ denote the $(1,0)$ and $(0,1)$ parts of the $\mathfrak{U(n)}$-valued one form $A$, respectively.

Uhlenbeck \cite{Uhlenbeck:89} showed that the equation \eqref{Equation:EulerLagrangeEquation} allows a spectral deformation, introducing the notion of extended solution. This is a family of smooth maps $\phi_{\lambda}:M \rightarrow U(n)$ depending smoothly on $\lambda \in S^1$ such that whose Maurer-Cartan form
\begin{equation*}
A_{\lambda } = \phi_{\lambda }^{-1} d \phi_{\lambda } \in \Omega ^{1}(M, \mathfrak{U(n)},
\end{equation*}
satisfies
\begin{equation*}
A_{\lambda } = (1-\lambda ^{-1})A_z + (1-\lambda)A_{\overline{z}}.
\end{equation*}
and $\phi_{-1}=\varphi$,  $\phi_1=\text{Id}$ (the identity element of $U(n)$). Moreover \cite{Uhlenbeck:89} $\varphi$ is harmonic if and only if there exists (at least locally) an extended solution $\phi_{\lambda}$ with $\phi_{-1}=\varphi$. Note that any two extended solutions for a harmonic map may differ by a function $Q: S^1 \rightarrow U(n)$ with $Q(1)=\text{Id}$. Since the inversion $U(n) \rightarrow U(n)$ ($g \rightarrow g^{-1}$) is an isometry, if $\varphi$ is harmonic, $\varphi^{-1}$ is again harmonic and, up to left translation, an extended solution for $\varphi^{-1}$ is $\phi_{-\lambda }\phi_{-1}^{-1}$ \cite{BurstallGuest:97}.

The map $\phi$ can be interpreted as a map into a loop group $\Omega U(n)$. We write $\phi : M \rightarrow \Omega U(n)$, $\phi_{\lambda} = \phi(\lambda)$.

Each loop $\gamma :S^1 \rightarrow U(n)$ is represented by a Fourier series $\gamma (\lambda ) = \sum_{j \in \mathbb{Z}} A_j \lambda ^j$ with coefficients $A_j \in End \mathfrak{U(n)}$, for some $k\in\mathbb{N}$. If we let $\Omega _k (U(n))$ denote the set of loops in $U(n)$ with Fourier series $\sum_{\mid j\mid \leq k} A_j \lambda ^j$, then
\begin{equation*}
\Omega (U(n)) = \cup _k \Omega _k (U(n)).
\end{equation*}
An extended solution is said to have finite uniton number if $\phi(M) \subset \Omega _k (U(n))$ for some $k$. As we have remarked above, the correspondence between harmonic maps and extended solutions is not one to one. If $\phi$ is an extended solution for a given harmonic map $\varphi : M \rightarrow U(n)$, the minimal $k$ such that $Q\phi (M) \subset \Omega _k (U(n))$, $Q \in \Omega (U(n))$, is called the uniton number of the harmonic map $\varphi$ and will be denoted by $r_{\varphi}$, or simply by $r$. Equivalently, \cite{Uhlenbeck:89},
\[r=\min \left \{k \in \mathbb{N}: Q \phi = \sum_{i=0}^{k}\lambda ^i A^i ~, ~Q \in \Omega (U(n))  \right \}.\]
Every harmonic map on a compact simply connected Riemann surface has finite uniton number \cite{Uhlenbeck:89}.

Let $\Lambda  Gl(\mathbb{C}^n)$ denote the space of all smooth maps $\gamma : S^1 \rightarrow Gl(\mathbb{C}^n)$ and $\Lambda _{+}$ its subgroup consisting of maps such that $\gamma $ extends holomorphically to the unit disc, i.e.
\[\Lambda _{+}= \{ \gamma :S^1 \rightarrow Gl(\mathbb{C}^n) : \gamma = \sum_{i\geq 0} \lambda^i A_i ~, ~\gamma ^{-1}= \sum_{i\geq 0}\lambda ^i B^i \}.\]
We refer to \cite{PressleySegal:86} for further developments on this theory. We only need the fact that $\Lambda Gl(\mathbb{C}^n) = \Omega (U(n))\Lambda _{+}$ (Iwasawa decomposition), where $\Omega (U(n))\cap \Lambda _{+}= \{\text{Id} \}$. Indeed, multiplication
\begin{equation*}
\Omega (U(n)) \times \Lambda _{+} \rightarrow \Lambda Gl(\mathbb{C}^n)
\end{equation*}
is a diffeomorphism \cite{PressleySegal:86}. Therefore each $\gamma \in \Lambda Gl(\mathbb{C}^n)$ may be written uniquely as
\begin{equation*}
\gamma =\gamma _u \gamma _{+},
\end{equation*}
where $\gamma _u \in \Omega (U(n))$ and $\gamma _{+} \in \Lambda _{+}$.

The Iwasawa decomposition yields an action of $\Lambda Gl(\mathbb{C}^n)$ on $\Omega (U(n))$ with isotropy subgroup $\Lambda _{+}$. Since $\Lambda Gl(\mathbb{C}^n)$ and $\Omega (U(n))$ are complex groups we obtain $\Omega (U(n)) = \Lambda Gl(\mathbb{C}^n)/\Lambda _{+}$ so that $\Omega (U(n))$ is endowed with a complex homogeneous structure. Another consequence of the Iwasawa decomposition is that each extended solution $\phi: M \rightarrow \Omega (U(n))$ may be written as $\phi = [\Psi ]$ for some $\Psi : M \rightarrow \Lambda Gl(\mathbb{C}^n)$, i.e. $\phi = \Psi _u$. Such $\Psi $ will be called a complex extended solution.

One has a natural action of $\mathbb{C}^* = \mathbb{C}\setminus \{0\}$ on $\Omega(U(n))$: namely,
\begin{equation}
\label{Equation:CStarAction}
(u \gamma)(\lambda ) =\gamma (\lambda u)\gamma (u)^{-1},
\end{equation}
for $u \in \mathbb{C}^*$, $\gamma \in \Omega (U(n))$. It is easily seen that, if $\phi : M \rightarrow \Omega (U(n))$ is an extended solution, $u \phi :M \rightarrow \Omega (Gl(\mathbb{C}^n))$ is a complex extended solution \cite{Uhlenbeck:89}. Clearly, $\phi_{\lambda}$ is a fixed point of this action if and only if, for each $z_0 \in M$, $\phi(z_0): S^1 \rightarrow U(n)$ is a homomorphism (closed geodesic), that is, a critical point for the energy functional on paths $E: \Omega (U(n)) \rightarrow \mathbb{R}$ ($E(\gamma ) = \int_{S^1} \|\gamma '\|^2$). The critical manifolds are the conjugacy classes of such homomorphisms, namely the classes $\Omega _{\gamma }= \left\{ g\gamma g^{-1}: g \in U(n) \right \}$. The harmonic maps arising this way are called isotropic (see \cite{EschenburgQuast}, \cite{EschenburgTribuzy}).

In this situation ($\Omega U(n) =\Omega _{alg}U(n)$), it turns out that the flow line of $\nabla E$ starting at some $\gamma \in \Omega( U(n))$ is defined for all time; it is given by the action of the sub-semigroup $]0,1]$ of $\mathbb{C}^*$ \cite{BurstallGuest:97}. Thus, given an extended solution $\phi$ and applying the gradient flow of the Morse-Bott function $E$ defined above, we associate a one parameter family of extended solutions $\left \{t \phi \right \}_{t \in ]0,1]}$ in such a way that $\lim_{t \rightarrow 0} (t \phi)$ is a $S^1$-invariant extended solution \cite{BurstallGuest:97}. In Section \ref{Section:StatementOfMainResult} we explicitly describe this procedure.


\section{Explicit formulae for Harmonic maps into $\U(n)$}\label{Section:ExplicitFormulaeForHarmonicMapsIntoUn}


Starting with a constant map, Uhlenbeck showed how to construct all harmonic maps $S^2 \rightarrow U(n)$ through a series of successive multiplications by suitable maps into Grassmannians -- the so called adding a uniton method. In \cite{FerreiraSimoesWood:09} J.~C.Wood and the authors showed how to explicitly build those unitons. To explain this we need some notation.

Let $\underline{\mathbb{C}}^n$ denote the trivial complex bundle equipped with the standard Hermitian inner product on each fiber, For each subbundle $\underline{\alpha}$ of $\underline{\mathbb{C}}^n$, let $\pi_{\alpha}$ and $\pi_{\alpha}^{\perp}$ denote the orthogonal projection onto $\underline{\alpha}$ and $\underline{\alpha}^{\perp}$, respectively, where $\underline{\alpha}^{\perp}$ represents the orthogonal complement of $\underline{\alpha}$. For each $\mathbb{C}^{n}$-valued meromorphic function $H$ on $M$ we denote by $H^{(k)}$ ($k\geq 0$) its $k$-th derivative with respect to some local complex coordinate on $M$.

The next theorem tells how all harmonic maps can be explicitly  built in terms of freely chosen meromorphic functions.

\begin{theorem}\label{Theorem:Theorem11FromFerSimWood09} For any $r\in\{0,1,\ldots, n-1\}$, let $(H_{i,j})_{0 \leq i \leq r-1,1\leq j\leq J}$ be an $r\times J$ ($J\leq n$) array of $\fieldC^n$-valued meromorphic functions on $M^2$, and let $\varphi_0$ be an element of $\U(n)$.
For each $i = 0,1,\ldots,r-1$, set $\underline{\alpha}_{i+1}$ equal to the subbundle of $\underline{\fieldC}^n$ spanned by the vectors
\begin{equation}\label{Equation:FirstEquationInTheorem:Theorem11FromFerSimWood09}
\alpha^{(k)}_{i+1,j}=\sum_{s=k}^{i} C^{i}_s H^{(k)}_{s-k, j}\qquad(j = 1,\ldots, J, \ k =0,1, \ldots, i).
\end{equation}
Then, the map  $\varphi:M^2 \to \U(n)$ defined by
\begin{equation}\label{Equation:SecondEquationInTheorem:Theorem11FromFerSimWood09}
\varphi=\varphi_0 (\pi_1-\pi_1^{\perp})\cdots(\pi_r-\pi_r^{\perp})
\end{equation}
is harmonic.

Further, all harmonic maps of finite uniton number, and so all harmonic maps from $S^2$, are obtained this way.
\end{theorem}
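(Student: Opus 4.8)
The plan is to derive both halves of the theorem from Uhlenbeck's uniton theory \cite{Uhlenbeck:89}. Since left translation by the constant $\varphi_0\in\U(n)$ is an isometry of $\U(n)$ and so preserves harmonicity, I would first reduce to the case $\varphi_0=\Id$. Attach to the product \eqref{Equation:SecondEquationInTheorem:Theorem11FromFerSimWood09} the partial products
\begin{equation*}
\Phi_i=\varphi_0(\pi_1+\lambda\pi_1^{\perp})\cdots(\pi_i+\lambda\pi_i^{\perp})\qquad(0\le i\le r),
\end{equation*}
so that $\Phi_0=\varphi_0$ is constant and $\varphi=\Phi_r|_{\lambda=-1}$. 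By \cite{Uhlenbeck:89}, $\Phi_r$ is an extended solution --- hence $\varphi$ harmonic, i.e.\ a solution of \eqref{Equation:EulerLagrangeEquation} --- precisely when, for every $i$, the subbundle $\underline{\alpha}_i$ satisfies the two uniton conditions relative to $\Phi_{i-1}$: one a holomorphicity ($\overline\partial$-type) condition, the other first order in the $z$-direction, with coefficients read off from the Maurer--Cartan form of $\Phi_{i-1}$. (Equivalently, in the Grassmannian model, the moving subspace $W(z)\subset L^2(S^1,\fieldC^n)$ attached to $\Phi_i$ satisfies $\lambda W(z)\subseteq W(z)$ and $\partial_z W(z)\subseteq\lambda^{-1}W(z)$.) Both assertions thus reduce to identifying which subbundle data solve these conditions at each stage; and the passage from ``finite uniton number'' to ``all harmonic maps from $S^2$'' in the last sentence is handled by the fact, recalled in Section~\ref{Section:HarmonicMapsAndExtendedSolutions}, that harmonic maps from compact simply connected surfaces automatically have finite uniton number.

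For the first assertion I would argue by induction on $i$, assuming $\Phi_{i-1}$ has already been realised as an extended solution whose uniton bundles $\underline{\alpha}_1,\dots,\underline{\alpha}_{i-1}$ are given by \eqref{Equation:FirstEquationInTheorem:Theorem11FromFerSimWood09}. One computes $\Phi_{i-1}^{-1}\d\Phi_{i-1}$ in terms of $\pi_1,\dots,\pi_{i-1}$ and checks the two conditions for $\underline{\alpha}_i=\wordspan_{\fieldC}\{\alpha^{(k)}_{i,j}\}$. The holomorphicity condition is nearly immediate, since each $\alpha^{(k)}_{i,j}$ is a $\fieldC$-linear combination of $z$-derivatives of the \emph{meromorphic} functions $H_{s,j}$ and is therefore $\overline\partial$-closed, and one only needs the $(0,1)$ part of the twisting term to carry the span into itself. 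The substantive point is the $z$-direction condition, and it is exactly here that the coefficients $C^i_s=\binom{i}{s}$ are forced: it relates $\partial_z$ of the generators of $\underline{\alpha}_i$ to the data of $\Phi_{i-1}$, and a direct calculation using Pascal's rule $\binom{i}{s}=\binom{i-1}{s-1}+\binom{i-1}{s}$ shows that the $z$-derivatives of the generators, once the twisting terms coming from the earlier projections are added in, land back in $\underline{\alpha}_i$. The induction then closes, $\Phi_r$ is an extended solution, and putting $\lambda=-1$ gives \eqref{Equation:SecondEquationInTheorem:Theorem11FromFerSimWood09}.

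For the converse I would start from Uhlenbeck's factorisation theorem: every harmonic map of finite uniton number has the form \eqref{Equation:SecondEquationInTheorem:Theorem11FromFerSimWood09} with $\varphi_0$ constant and $\underline{\alpha}_1,\dots,\underline{\alpha}_r$ a sequence of unitons (for the standard factorisation one also has $r\le n-1$, consistent with the hypotheses), and I would show every such sequence is of the stated form. The first uniton $\underline{\alpha}_1$ is an honest holomorphic subbundle of $\underline{\fieldC}^n$; over a Riemann surface, a fortiori over $S^2$, such a subbundle is spanned by finitely many $\fieldC^n$-valued meromorphic functions $H_{0,1},\dots,H_{0,J}$, which is \eqref{Equation:FirstEquationInTheorem:Theorem11FromFerSimWood09} with $i=0$. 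Inductively, once $\underline{\alpha}_1,\dots,\underline{\alpha}_i$ are expressed through $H_{0,j},\dots,H_{i-1,j}$, the uniton conditions for $\underline{\alpha}_{i+1}$ relative to $\Phi_i$ become, in a local holomorphic frame, a first-order linear system in the $z$-direction with coefficients built from the known data; integrating it, the ``constants of integration'' are precisely a fresh batch of arbitrary $\fieldC^n$-valued meromorphic functions $H_{i,1},\dots,H_{i,J}$ on $M$, and comparison with the forward computation identifies the solution with $\underline{\alpha}_{i+1}=\wordspan_{\fieldC}\{\alpha^{(k)}_{i+1,j}\}$, the binomial coefficients reappearing as the solution of the recursion.

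The hard part will be this inductive step of the converse: one must show that the linear system encoding the uniton conditions for $\underline{\alpha}_{i+1}$ is solved \emph{completely and without redundancy} by the binomial ansatz with freely chosen meromorphic data --- that no solution is missed and that the number of free functions is exactly right --- which means tracking precisely how the successively twisted $\overline\partial$- and $z$-direction operators unwind, the identity $\binom{i}{s}=\binom{i-1}{s-1}+\binom{i-1}{s}$ being invoked at each level. Two further, more routine, points need care: the non-uniqueness of the uniton factorisation (handled by fixing the standard one, the residual ambiguity in the $H_{i,j}$ being itself meromorphic and invisible in $\varphi$); and the fact that the $H_{i,j}$ are only meromorphic, so that \eqref{Equation:SecondEquationInTheorem:Theorem11FromFerSimWood09} a priori has singularities --- one invokes removability (the bundles $\underline{\alpha}_i$, hence $\varphi$, extend smoothly across the apparent poles) to conclude that $\varphi$ is a globally defined harmonic map, in particular on $S^2$.
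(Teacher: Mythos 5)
First, a point of comparison: this paper does not prove Theorem \ref{Theorem:Theorem11FromFerSimWood09} at all --- it is recalled verbatim from \cite{FerreiraSimoesWood:09}, so there is no in-paper argument to match your proposal against. Your overall skeleton (reduce to $\varphi_0=\Id$, pass to the partial products $\Phi_i=\varphi_0(\pi_1+\lambda\pi_1^\perp)\cdots(\pi_i+\lambda\pi_i^\perp)$, verify Uhlenbeck's two uniton conditions inductively for the forward direction, and use Uhlenbeck's factorisation plus finite uniton number on $S^2$ for the converse) is indeed the strategy of the cited source, and your closing remarks about meromorphic singularities and the non-uniqueness of the factorisation are sensible.

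However, there is a genuine error at the heart of your sketch: you read the coefficients $C^i_s$ in \eqref{Equation:FirstEquationInTheorem:Theorem11FromFerSimWood09} as the binomial coefficients $\binom{i}{s}$. By \eqref{Equation:DefinitionOfCSIWithoutTheParameter} they are $\End(\fieldC^n)$-valued operators, the $s$'th elementary symmetric functions of the projections $\pi_i^\perp,\ldots,\pi_1^\perp$, and they satisfy the \emph{noncommutative} Pascal-type recursion $C^i_s=C^{i-1}_s+\pi_i^\perp C^{i-1}_{s-1}$ (used later in Section \ref{Section:Appendix}), not the numerical identity you invoke. The distinction is not cosmetic: for instance with one column and $r=2$ the theorem gives $\underline{\alpha}_2=\wordspan\{H_{0,1}+\pi_1^\perp H_{1,1},\,\pi_1^\perp H_{0,1}'\}$, whereas the numerical reading would give $\wordspan\{H_{0,1}+H_{1,1},\,H_{0,1}'\}$, a different bundle which in general is not a uniton. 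This misreading also undercuts your claim that the holomorphicity condition is ``nearly immediate because the generators are $\overline\partial$-closed'': the generators $\alpha^{(k)}_{i+1,j}$ involve the projections $\pi_s^\perp$, which are not antiholomorphically constant, and holomorphicity must be verified with respect to the Koszul--Malgrange structure twisted by the $(0,1)$-part of the Maurer--Cartan form of $\Phi_i$; checking both uniton conditions for these operator-weighted spans is precisely the nontrivial computation carried out in \cite{FerreiraSimoesWood:09}, not a two-line consequence of Pascal's rule. Likewise, in the converse the ``fresh meromorphic constants of integration'' must enter through these same operator coefficients, so the inductive step you flag as the hard part cannot be closed as written until the correct $C^i_s$ are used throughout.
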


Here we have used the following notation: for each $i$, $\pi_i$ denotes $\pi_{\underline{\alpha}_i}$ whereas $\pi_i^{\perp}$ stands for $\pi_{\underline{\alpha}_i^\perp}$. Moreover, for integers $i$ and $s$ with $0\leq s\leq i$, $C^i_s$ denotes the \emph{$s$'th elementary function} of the projections $\pi_i^{\perp}, \ldots, \pi_1^{\perp}$ given by
\begin{equation}\label{Equation:DefinitionOfCSIWithoutTheParameter}
C^i_s=\sum_{1\leq i_1<\cdots<i_s\leq i}\pi_{i_s}^{\perp}\cdots\pi_{i_1}^{\perp}.
\end{equation}

\bigskip

The knowledge of the uniton number and initial data $(H_{i,j})$ completely describes the feature of the harmonic map.

The array $(H_{i,j})$ which determines a given list of $\underline{\alpha}_i$ is not unique; for example it can be replaced by any array with the same column span over the meromorphic functions. We will say that two arrays $(H_{i,j})$ and $(\overline{H}_{i,j})$ are equivalent if they determine the same harmonic map. Indeed, through column operations, one can replace the array by one equivalent with linearly independent columns in the \emph{echelon} form

\renewcommand{\arraycolsep}{1pt}
\begin{equation}\label{Equation:ArrayHInEchelonForm}
\mathfrak{H}=\left[
\begin{array}{cccccccccccc}
H_{0,1}  &...&H_{0,d_1}  & 0           &...& 0         & 0           & 0 &...        & 0           &...&0          \\
H_{1,1}  &...&H_{1,d_1}  &H_{1,d_1+1}  &...&H_{1,d_2}  & 0           & 0 &...        & 0           &...&0          \\
H_{2,1}  &...&H_{2,d_1}  &H_{2,d_1+1}  &...&H_{2,d_2}  &H_{2,d_2+1}  &...&H_{2,d_3}  &0            &...&0          \\
...      &...&...        & ...         &...& ...       & ...         &...& ...       &...          &...&...        \\
H_{r-1,1}&...&H_{r-1,d_1}&H_{r-1,d_1+1}&...&H_{r-1,d_2}&H_{r-1,d_2+1}&...&H_{r-1,d_3}&H_{r-1,d_3+1}&...&H_{r-1,d_r}\\
\end{array}
\right]
\end{equation}\renewcommand{\arraycolsep}{1pt}\noindent

with $0 \leq d_1 \leq d_2 \leq ...\leq d_r \leq n$, where, for each $i = 1, ..., r$, the sub-array made up of the first $i$ rows and the first $d_i$ columns has linearly independent columns; equivalently, for each $i$ the vectors $H_{i, d_i + 1}, ..., H_{i, d_{i+1}}$ are linearly independent. $\alpha_{i}^{(0)}$ is built from that sub-array by the formula \eqref{Equation:FirstEquationInTheorem:Theorem11FromFerSimWood09} and so has rank at most $d_i$.

In the sequel the above array will be referred as the initial data for the harmonic map $\varphi$; we will assume that an initial data is always given in an echelon form.

For a given array $\mathfrak{H}$ we denote by $\varphi^{\mathfrak{H}}$ the corresponding harmonic map, according to Theorem \ref{Theorem:Theorem11FromFerSimWood09}.

It can be easily seen that if the data $(H_{i,j})$ is in diagonal form

\renewcommand{\arraycolsep}{2.5pt}
\begin{equation}\label{Equation:HsInDiagonalForm}
\left[
\begin{array}{cccccccccccccc}
H_{0,1}&...& H_{0,d_1}& 0         &...&  0      & 0         &...& 0       & 0 &...&0                &...&0          \\
0      &...& 0        &H_{1,d_1+1}&...&H_{1,d_2}& 0         &...& 0       & 0 &...&0                &...&0          \\
0      &...& 0        & 0         &...& 0       &H_{2,d_2+1}&...&H_{2,d_3}& 0 &...&0                &...&0          \\
...    &...&...       & ...       &...& ....    & ...       &...& ...     &...&...&...              &...&...        \\
0      &...&0         &0          &...&0        &0          &...&0        & 0 &...&H_{r-1,d_{r-1}+1}&...&H_{r-1,d_r}\\
\end{array}
\right]
\end{equation}\renewcommand{\arraycolsep}{1pt}\noindent

then the unitons $\underline{\alpha}_i$ are nested, i.e. $\underline{\alpha}_i \subseteq \underline{\alpha}_{i+1}$. Therefore the harmonic map $\varphi^{\mathfrak{H}} = (\pi_{1}-\pi_{1}^{\perp})...(\pi_r - \pi_{r}^{\perp})$ has image in $G_{*}(\mathbb{C}^n)$, where $\pi_i = \pi_{\underline{\alpha}_i}$ and $\pi_{i}^{\perp} = \pi_{\underline{\alpha}_{i}^{\perp}}$ . These are the harmonic maps invariant by the $S^1$-action described above \cite{Uhlenbeck:89}. They are also called isotropic harmonic maps \cite{EschenburgTribuzy}.

\begin{remark}
Consider now the arrays $\mathfrak{H}=(H_{i,j})$ and $\overline{\mathfrak{H}}=(\overline{H}_{i,j})$, where $\mathfrak{H}$ is in diagonal form as in \eqref{Equation:HsInDiagonalForm} and, for each $d_k < j \leq d_{k+1}$ ~ $(k \in \mathbb{N})$ and $i > k$, $\overline{H}_{k,j} = H_{k,j}$ and $\overline{H}_{i,j} = a_{i,j}H_{k,j}$ ~$(a_{i,j} \in \mathbb{R})$, i.e.: $\overline{\mathfrak{H}}$ is given by

\renewcommand{\arraycolsep}{1pt}
\[\left[
\begin{array}{ccccccccccc}
H_{0,1}         &...& H_{0,d_1}          & 0                      &...&          0         & 0                      &...&0                &...&0\\
a_{1,1}H_{0,1}  &...&a_{1,d_1}H_{0,d_1}  & H_{1, d_1 + 1}         &...&H_{1, d_2}          & 0                      &...&0                &...&0\\
a_{2,1}H_{0,1}  &...&a_{2,d_1}H_{0,d_1}  &a_{2,d_1+1}H_{1,d_1+1}  &...&a_{2,d_2}H_{1,d_2}  & H_{2,d_2 + 1}          &...&0                &...&0\\
...             &...& ...                &            ...         &...&                    &...                     &...&...              &...&...\\
a_{r-1,1}H_{0,1}&...&a_{r-1,d_1}H_{0,d_1}&a_{r-1,d_1+1}H_{1,d_1+1}&...&a_{r-1,d_2}H_{1,d_2}&a_{r-1,d_2+1}H_{2,d_2+1}&...&H_{r-1,d_{r-1}+1}&...&H_{r-1,d_r}\\
\end{array}
\right]\]\renewcommand{\arraycolsep}{1pt}\noindent
Using a simple inductive argument it can be easily seen that the two arrays $\mathfrak{H}$ and $\overline{\mathfrak{H}}$ are equivalent and produce the same isotropic harmonic map. This fact will be used in the next section.
\end{remark}
In the sequel a given array will be called a diagonal array if it is equivalent to an array in diagonal form.

\begin{example}

It is well known \cite{Uhlenbeck:89} that the maximal uniton number for a harmonic map $\phi: S^2 \rightarrow U(n)$ is $n-1$. Those maps with maximal uniton number are built out of an initial array of the type

\renewcommand{\arraycolsep}{5pt}
\[\left[
\begin{array}{cccc}
H_{0,1}  & 0 & ... & 0  \\
H_{1,1}  & 0 & ... & 0  \\
H_{2,1}  & 0 & ... & 0  \\
...      & 0 &...  & 0\\
H_{n-2,1}& 0 &...  & 0,
\end{array}.
\right]\]\renewcommand{\arraycolsep}{1pt}\noindent
In this case, $\alpha^{(k)}_{i+1}=\sum_{s=k}^{i} C^{i}_s H^{(k)}_{s-k, 1}$ and $\underline{\alpha}_{i+1}=\text{span}\{\alpha_{i+1}^{(k)}\}_{\text{\tiny{$0\leq k\leq i$}}}$ for each $i\leq n-2$.
\end{example}

\begin{example}

Let us consider now the case $r=2$. We may now choose several non-null columns. For instance, take the array $\left( H_{ij}\right) $ consisting of two rows

\renewcommand{\arraycolsep}{5pt}
\[\left[
\begin{array}{ccccc}
H_{0,1} & ... & H_{0,d_1}  & 0 ...&0 \\
H_{1,1} & ... & H_{1,d_1} & 0 ...&0 \\
\end{array}
\right].\]\renewcommand{\arraycolsep}{1pt}\noindent
In this case, \underline{$\alpha $}$_{1}$ is the span of $\left\{H_{01},...,H_{0d_{1}}\right\} $ and \underline{$\alpha $}$_{2}$  is the span of $\ \left\{ H_{0j}+C_{1}^{1}H_{1j},C_{1}^{1}H_{0j}^{\prime } : 1 \leq j \leq d_1 \right\}$.

\end{example}

\begin{example}
Let $F_0$ denote a constant subspace of $\mathbb{C}^7$ with $\dim 3$, $L_0$, $L_1$ be linearly independent $F_0$-valued meromorphic functions and $E_0$ a $F_{0}^{\perp}$-valued meromorphic function. The following array:

\renewcommand{\arraycolsep}{5pt}
\[\mathfrak{H}=\left[
\begin{array}{ccc}
L_0 & E_0 & 0  \\
0 & 0 & L_1
\end{array}
\right]\]
\renewcommand{\arraycolsep}{1pt}\noindent

corresponds to a $S^1$-invariant harmonic map into $G_2(\mathbb{C}^7)$, $\varphi = Q(\pi_1 - \pi_{1}^{\perp})(\pi_{2}-\pi_{2}^{\perp})$, where $Q=\pi_{F_0} - \pi_{F_{0}^{\perp}}$.

Notice that left multiplication by a constant map $Q$ does not, in general, preserves the image in $G_*(\mathbb{C}^n)$. Letting $\varphi : M \rightarrow G_2(\mathbb{C}^7)$ denote the above harmonic map, $\varphi^{\mathfrak{H}} = (\pi_{F_0} - \pi_{F_{0}^{\perp}}) \varphi$ is not Grassmannian valued since $\varphi^{\mathfrak{H}} \times \varphi^{\mathfrak{H}}\neq \text{Id}$.

\end{example}

\begin{example}
Let $\varphi : M \rightarrow G_{*}(\mathbb{C}^n)$ be a non-constant harmonic map of uniton number $1$. Then, if $\varphi$ is not holomorphic, it must be of the form $\varphi = (\pi_{F_0} - \pi_{F_{0}^{\perp}})(\pi_1 - \pi_1 ^{\perp})$. It is easily seen that $\varphi$ is $G_{*}(\mathbb{C}^n)$-valued if and only if $\pi_{F_0}$ and $\pi_1$ commute, i.e. if and only if $F_0$ decomposes $\underline{\alpha}_1$

\begin{equation}
\underline{\alpha}_1 = \underline{\alpha}_1\cap F_0 \oplus \underline{\alpha}_{1}^{\perp}\cap F_{0}^{\perp}.
\end{equation}
\end{example}

General harmonic maps into Grassmannian manifolds can also be described in terms of meromorphic data. To describe that we need the following definition:

\begin{definition}
Let $F_0$ be a constant subspace in $\mathbb{C}^n$. An $r \times n$ $F_0$-array is a family of $\mathbb{C}^n$-valued meromorphic functions $(K_{i,j})_{0\leq i\leq r-1, 1\leq j\leq n}$ such that, for each $j$, either
\begin{equation}
\begin{array}{l}
\pi_{F_0^\perp}(K_{2k,j})=0\text{ and }\pi_{F_0}(K_{2k+1,j})=0\text{ or}\\
\pi_{F_0}(K_{2k,j})=0\text{ and }\pi_{F_0^\perp}(K_{2k+1,j})=0,
\end{array}
\end{equation}
where $0\leq k\leq\frac{r-1}{2}$.

\end{definition}

For $G_{*}(\mathbb{C}^n)$-valued harmonic maps theorem $2.1$ specializes in the following way \cite{FerreiraSimoes:12}:

\begin{theorem}\label{Theorem:MainTheoremNonBundleVersion}
Let $F_0$ be a constant subspace in $\fieldC^n$. For any $r\in\{0,1,...,n-1\}$, let \\ $(K_{i,j})_{0\leq i\leq r-1,1\leq j\leq n}$ be an $r\times n$ $F_0$-array of $\fieldC^n$-valued meromorphic functions on $M^2$. For each $j$, consider the meromorphic functions
\begin{equation}\label{Equation:FirstEquationInTheorem:MainTheoremNonBundleVersion}
\begin{array}{l}
H_{0,j}=K_{0,j}\text{ and}\\
H_{i,j}=\displaystyle{\sum_{s=1}^i(-1)^{s+i}\binom{i-1}{s-1}K_{s,j}\text{,  }i\geq 1}.
\end{array}
\end{equation}
For each $0\leq i\leq r-1$, set $\underline{\alpha}_{i+1}$ equal to the subbundle of $\fieldC^n$ spanned by the vectors
\[\alpha_{i+1,j}^{(k)}=\sum_{s=k}^i C^i_s H_{s-k,j}^{(k)},\,(j=1,...,n,\,k=0,...,i).\]
Then, the map $\varphi:M^2\to \U(n)$ defined by
\[\varphi=(\pi_{F_0}-\pi_{F_0}^\perp)(\pi_{1}-\pi_{1}^\perp)...(\pi_{r}-\pi_{r}^\perp)\]
is harmonic.

Further, all harmonic maps from $M^2$ to $G_*(\fieldC^n)$ of finite uniton number, and so harmonic maps from $S^2$ to $G_*(\fieldC^n)$ are obtained this way.
\end{theorem}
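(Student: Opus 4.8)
The plan is to derive Theorem~\ref{Theorem:MainTheoremNonBundleVersion} from Theorem~\ref{Theorem:Theorem11FromFerSimWood09}, so that harmonicity comes essentially for free and all the content sits in the two refinements that (a) the $F_0$-array hypothesis forces the image into $G_*(\fieldC^n)$, and (b) every such map arises this way. For harmonicity, observe that the $H_{i,j}$ produced by \eqref{Equation:FirstEquationInTheorem:MainTheoremNonBundleVersion} are again $\fieldC^n$-valued meromorphic functions on $M^2$, that $Q:=\pi_{F_0}-\pi_{F_0}^\perp$ is a constant element of $\U(n)$, and that with these choices $\varphi=Q(\pi_1-\pi_1^\perp)\cdots(\pi_r-\pi_r^\perp)$ is precisely the map assigned by Theorem~\ref{Theorem:Theorem11FromFerSimWood09} to the array $(H_{i,j})$ and the initial isometry $\varphi_0=Q$; equivalently, left translation by $Q$ is an isometry of $\U(n)$ and hence preserves harmonicity of $\varphi^{\mathfrak H}$.

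For (a) I would use the elementary fact that a finite product of pairwise commuting Hermitian involutions is itself a Hermitian involution, hence lies in the image of the totally geodesic embedding $G_*(\fieldC^n)\hookrightarrow\U(n)$; since each $\pi_i-\pi_i^\perp$ and $Q$ is already a Hermitian involution, it suffices to prove that the projections $\pi_{F_0},\pi_1,\dots,\pi_r$ pairwise commute (first where the $\underline\alpha_i$ have generic rank, then everywhere by continuity). This I would establish by induction on $i$, carrying the stronger statement that each $\underline\alpha_i$ splits compatibly with $\underline{\fieldC}^n=F_0\oplus F_0^\perp$, each generator $\alpha_{i+1,j}^{(k)}$ lying entirely in $F_0$ or in $F_0^\perp$ according to the parity of $i$ and the type of the column $j$ prescribed by the definition of an $F_0$-array. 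Here the role of \eqref{Equation:FirstEquationInTheorem:MainTheoremNonBundleVersion} becomes visible: once the elementary symmetric functions $C^i_s$ of $\pi_i^\perp,\dots,\pi_1^\perp$ --- which by the inductive hypothesis already respect the splitting --- are inserted into the formula for $\alpha_{i+1,j}^{(k)}$, the binomial weights in \eqref{Equation:FirstEquationInTheorem:MainTheoremNonBundleVersion} are exactly those for which the terms mixing the $F_0$- and $F_0^\perp$-components of the $K^{(k)}_{s,j}$ cancel, so that $\alpha_{i+1,j}^{(k)}$ remains in a single summand. Granting the inductive step, all the projections commute and $\varphi=\varphi^*=\varphi^{-1}$.

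For (b) I would begin with a $G_*(\fieldC^n)$-valued harmonic map $\varphi$ of finite uniton number and use the normalization of uniton factorizations for Grassmannian-valued maps to write $\varphi=Q(\pi_1-\pi_1^\perp)\cdots(\pi_r-\pi_r^\perp)$ with $Q=\pi_{F_0}-\pi_{F_0}^\perp$ a constant Hermitian involution and all the factors pairwise commuting; this step relies on the relation between Grassmannian-valued maps, the $S^1$-invariant (isotropic) ones, and the diagonal-form discussion of Section~\ref{Section:ExplicitFormulaeForHarmonicMapsIntoUn}. Applying the completeness half of Theorem~\ref{Theorem:Theorem11FromFerSimWood09} to $Q\varphi=(\pi_1-\pi_1^\perp)\cdots(\pi_r-\pi_r^\perp)$ produces an array $(H_{i,j})$ generating the $\underline\alpha_i$; I would then set $K_{0,j}=H_{0,j}$ and $K_{i,j}=\sum_{s=1}^{i}\binom{i-1}{s-1}H_{s,j}$ --- the inverse of \eqref{Equation:FirstEquationInTheorem:MainTheoremNonBundleVersion} --- and check, by running the computation of (a) in reverse, that the commutation of $Q$ with the $\pi_i$ forces $(K_{i,j})$ to satisfy the alternating $F_0$-array conditions.

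The genuine obstacle, both in (a) and in its converse in (b), is the inductive book-keeping: verifying that the binomial coefficients in \eqref{Equation:FirstEquationInTheorem:MainTheoremNonBundleVersion} are precisely the ones that keep each $\alpha_{i+1,j}^{(k)}$ inside a single block after the parity-mixing action of the $C^i_s$. A secondary input for (b) is the normalization that a Grassmannian-valued harmonic map of finite uniton number admits such a commuting factorization with constant first factor $\pi_{F_0}-\pi_{F_0}^\perp$. Conceptually, one expects \eqref{Equation:FirstEquationInTheorem:MainTheoremNonBundleVersion} to take this exact shape because the binomial transform implements, at the level of the meromorphic data, the passage between the extended solution of a Grassmannian-valued harmonic map and the uniton factorization supplied by Theorem~\ref{Theorem:Theorem11FromFerSimWood09}.
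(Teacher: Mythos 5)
Your reduction of the harmonicity claim to Theorem \ref{Theorem:Theorem11FromFerSimWood09} (with $\varphi_0=Q=\pi_{F_0}-\pi_{F_0}^\perp$) is correct, and so is your inversion $K_{i,j}=\sum_{s=1}^i\binom{i-1}{s-1}H_{s,j}$ of \eqref{Equation:FirstEquationInTheorem:MainTheoremNonBundleVersion}; note, however, that the present paper does not prove Theorem \ref{Theorem:MainTheoremNonBundleVersion} at all --- it is imported from \cite{FerreiraSimoes:12} --- so the substance of your proposal lies in (a) and (b), and there the central structural claim is false. It is not true that the $F_0$-array condition forces each generator $\alpha_{i+1,j}^{(k)}$ into a single summand of $F_0\oplus F_0^\perp$, nor that $\pi_{F_0},\pi_1,\dots,\pi_r$ pairwise commute. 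Take $n=3$, $F_0=\mathrm{span}\{e_1,e_2\}$, $r=2$, and one nonzero column with $K_{0,1}=(f_1,f_2,0)$ ($F_0$-valued) and $K_{1,1}=(0,0,g)$ ($F_0^\perp$-valued): this is a legitimate $F_0$-array, and \eqref{Equation:FirstEquationInTheorem:MainTheoremNonBundleVersion} gives $H_{0,1}=K_{0,1}$, $H_{1,1}=K_{1,1}$, so no binomial cancellation can occur at this level. Then $\underline{\alpha}_1=\mathrm{span}\{H_{0,1}\}\subset F_0$, while the generator $\alpha_{2,1}^{(0)}=H_{0,1}+\pi_1^\perp H_{1,1}=K_{0,1}+K_{1,1}$ of $\underline{\alpha}_2$ has nonzero components in both $F_0$ and $F_0^\perp$; generically $\underline{\alpha}_2\neq(\underline{\alpha}_2\cap F_0)\oplus(\underline{\alpha}_2\cap F_0^\perp)$, so $\pi_{F_0}$ does not commute with $\pi_2$ (nor does $\pi_1$). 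Nevertheless the product is a Hermitian involution: with $f_1=1$, $f_2=g=z$, at $z=1$ one finds $\underline{\alpha}_1=\mathrm{span}\{(1,1,0)\}$, $\underline{\alpha}_2=\mathrm{span}\{(1,1,1),(-1,1,0)\}$, and $(\pi_{F_0}-\pi_{F_0}^\perp)(\pi_1-\pi_1^\perp)(\pi_2-\pi_2^\perp)$ equals the symmetric matrix with rows $(-\frac13,\frac23,\frac23)$, $(\frac23,-\frac13,\frac23)$, $(\frac23,\frac23,-\frac13)$, which squares to the identity. So Grassmannian values are attained without any commutation, your induction collapses at its first step, and the criterion you aim at (pairwise commuting involutions) is strictly stronger than what actually holds; commuting, nested factorizations occur only in the diagonal/isotropic situation \eqref{Equation:HsInDiagonalForm}.

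The same false premise undermines (b): a Grassmannian-valued harmonic map of finite uniton number does not in general admit a uniton factorization $Q(\pi_1-\pi_1^\perp)\cdots(\pi_r-\pi_r^\perp)$ with pairwise commuting factors, so there is no ``computation of (a) run in reverse'' available, and the alternation of the $K_{i,j}$ cannot be extracted that way. The true mechanism is global rather than blockwise: writing $S_i=\pi_i-\pi_i^\perp$, what must be proved is the identity $QS_1\cdots S_r=S_r\cdots S_1Q$ (equivalently $\varphi^2=\Id$), and in \cite{FerreiraSimoes:12} this is obtained by encoding the Grassmannian condition as a symmetry of the extended solution and matching it against the factorization of Theorem \ref{Theorem:Theorem11FromFerSimWood09}; the binomial transform \eqref{Equation:FirstEquationInTheorem:MainTheoremNonBundleVersion} is exactly the change of meromorphic data produced by that comparison, not a device that keeps each $\underline{\alpha}_i$ compatible with $F_0\oplus F_0^\perp$. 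To repair your plan you would have to abandon the commutation claim and prove the involution identity (or the corresponding symmetry of $\Phi_\lambda$) directly from the $F_0$-array condition, which is where the genuine work of the cited proof lies.
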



\section{The spectral parameter}\label{Section:StatementOfMainResult}


Our main result is based on the fact that the harmonic maps $S^2 \rightarrow U(n)$ come into clusters labeled by the  different isotropic harmonic maps (\emph{basic maps}). The flow lines of $\nabla E$ carry the harmonic maps to its corresponding basic harmonic map. The main purpose of this section is to explicitly describe this procedure in terms of our initial data.

\smallskip

Throughout this section we let
\begin{equation}
\varphi= Q (\pi_1 - \pi_{1}^{\perp})...(\pi_r - \pi_{r}^{\perp})
\end{equation}
be a harmonic map with uniton number $r$ built out of the array $\mathfrak{H}$ as in \eqref{Equation:ArrayHInEchelonForm}

As mentioned above, its extended solution $\phi$ can be regarded as a map $\phi: S^2 \rightarrow \Omega_{alg} U(n)$. Then, under the gradient flow of the energy functional, $E: \Omega U(n) \rightarrow \mathbb{R}$, $E(\gamma )=\int_{S^1}|\gamma'|$, $\phi$ can be deformed into a simpler extended solution $\phi_0$, one taking values in a conjugacy class of a Lie group homomorphism $S^1 \rightarrow \Omega  U(n)$. The corresponding harmonic map $\varphi_0$ is isotropic and has the same uniton number.

To describe this feature in terms of the initial data just consider the following one parameter family:
\begin{equation}
\mathfrak{H}(t)=(H_{i,j}(t)),
\end{equation}
where, for each $d_k + 1 \leq j \leq d_{k+1}$,
\begin{equation}\label{Equation:DefinitionOfHijt}
H_{i,j}(t)=\sum_{s=0}^{i}{i\choose s} t^{s-k}(t-1)^{i-s} H_{s,j}
\end{equation}
We recall that $d_k$ denotes the number of non zero entries in row $k$ and we will consider $d_0=0$. Clearly $\mathfrak{H}(1) = \mathfrak{H}$.

We now state the main theorem:

\begin{theorem}\label{Theorem:MainTheorem}
Let $\varphi^{\mathfrak{H}}:S^2\rightarrow U(n)$ be a harmonic map of uniton number $r$ corresponding to an array $\mathfrak{H}$. Then, the one parameter family of harmonic maps $\varphi_t^{\mathfrak{H}}$ corresponding to the flow line $\nabla E$ is associated to the one parameter family of arrays $\mathfrak{H}(t)$ .
\end{theorem}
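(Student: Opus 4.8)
The plan is to follow the gradient flow explicitly at the level of extended solutions. By Theorem~\ref{Theorem:Theorem11FromFerSimWood09}, $\varphi^{\mathfrak{H}}=Q(\pi_{1}-\pi_{1}^{\perp})\cdots(\pi_{r}-\pi_{r}^{\perp})$ with $\pi_{i}=\pi_{\underline{\alpha}_{i}}$, and $\phi_{\lambda}=Q(\pi_{1}+\lambda\pi_{1}^{\perp})\cdots(\pi_{r}+\lambda\pi_{r}^{\perp})$ is an associated extended solution \cite{Uhlenbeck:89} (each factor is unitary for $\lambda\in S^{1}$, and $\phi_{-1}=\varphi^{\mathfrak{H}}$). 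In this algebraic setting the flow line of $\nabla E$ through $\phi$ is $t\mapsto t\phi$, $t\in\,]0,1]$, for the $\fieldC^{*}$-action \eqref{Equation:CStarAction} \cite{BurstallGuest:97}, so the harmonic map at time $t$ is $\varphi_{t}=\bigl([t\phi]\bigr)_{-1}$, where $[\,\cdot\,]$ denotes the $\Omega U(n)$-factor in the Iwasawa decomposition $\Lambda Gl(\fieldC^{n})=\Omega U(n)\,\Lambda_{+}$. The constant unitary $Q$ only conjugates everything in sight, and acts in the same way on $\mathfrak{H}$ and on $\mathfrak{H}(t)$, so I suppress it. Since $\mathfrak{H}(1)=\mathfrak{H}$ by \eqref{Equation:DefinitionOfHijt}, the theorem reduces to the claim that $[t\phi]$ is the extended solution attached by Theorem~\ref{Theorem:Theorem11FromFerSimWood09} to $\mathfrak{H}(t)$; evaluating at $\lambda=-1$ then gives $\varphi_{t}=\varphi^{\mathfrak{H}(t)}$.

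First I would compute $t\phi$. With $a_{i}(\lambda)=\pi_{i}+\lambda\pi_{i}^{\perp}$ and $b_{i}=\pi_{i}+t\pi_{i}^{\perp}$ one has $\pi_{i}+t\lambda\pi_{i}^{\perp}=a_{i}(\lambda)b_{i}$, and $b_{i}$ is invertible and constant in $\lambda$, hence lies in $\Lambda_{+}$. Using $\phi_{t}=b_{1}\cdots b_{r}$, a short telescoping computation gives
\[(t\phi)_{\lambda}=\phi_{t\lambda}\,\phi_{t}^{-1}=\prod_{i=1}^{r}G_{i-1}(t)\,a_{i}(\lambda)\,G_{i-1}(t)^{-1},\qquad G_{i-1}(t)=b_{1}\cdots b_{i-1},\]
that is, $(t\phi)_{\lambda}=\prod_{i=1}^{r}\bigl(P_{i}(t)+\lambda P_{i}(t)^{\perp}\bigr)$, where $P_{i}(t)=G_{i-1}(t)\,\pi_{i}\,G_{i-1}(t)^{-1}$ is the (generally non-orthogonal) projection whose image is $G_{i-1}(t)\underline{\alpha}_{i}$. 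In particular $P_{1}(t)=\pi_{1}$, so the first uniton is unchanged, consistent with the fact --- visible from \eqref{Equation:DefinitionOfHijt} --- that the zeroth row of $\mathfrak{H}(t)$ equals that of $\mathfrak{H}$.

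Next I would pass from this complex uniton factorisation to the unitary extended solution $[t\phi]$, equivalently describe the subbundle $(t\phi)H_{+}=\phi_{t\lambda}H_{+}$ in the Grassmannian model of \cite{PressleySegal:86}. The elementary ingredient is that, for any projection $P$ onto a subbundle, $(P+\lambda P^{\perp})=(\pi_{\image P}+\lambda\pi_{\image P}^{\perp})\,r_{+}(\lambda)$ with $r_{+}\in\Lambda_{+}$ --- indeed $(\pi_{\image P}+\lambda^{-1}\pi_{\image P}^{\perp})(P+\lambda P^{\perp})$ and its inverse have no negative Fourier modes, because $\pi_{\image P}^{\perp}P=0=P^{\perp}\pi_{\image P}$. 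Applying this to the factors of $(t\phi)_{\lambda}$ successively and carrying the residual $\Lambda_{+}$-terms to the right yields an extended solution of uniton type
\[[t\phi]=\prod_{i=1}^{r}\bigl(\pi_{\underline{\alpha}_{i}(t)}+\lambda\pi_{\underline{\alpha}_{i}(t)}^{\perp}\bigr),\]
where $\underline{\alpha}_{1}(t)=\underline{\alpha}_{1}$, and $\underline{\alpha}_{i}(t)$ is a subbundle of $\underline{\fieldC}^{n}$ built recursively out of $\underline{\alpha}_{1}(t),\dots,\underline{\alpha}_{i-1}(t)$ and $G_{i-1}(t)\underline{\alpha}_{i}$ (over $S^{2}$: the corresponding meromorphic subbundle with its removable singularities filled in).

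The heart of the proof --- and the step I expect to be the main obstacle --- is to identify these $\underline{\alpha}_{i}(t)$ with the bundles that Theorem~\ref{Theorem:Theorem11FromFerSimWood09} constructs from $\mathfrak{H}(t)$, namely $\underline{\alpha}_{i+1}(t)=\wordspan\{\alpha^{(k)}_{i+1,j}(t)\}$ with $\alpha^{(k)}_{i+1,j}(t)=\sum_{s=k}^{i}C^{i}_{s}(t)H^{(k)}_{s-k,j}(t)$, where $C^{i}_{s}(t)$ is the elementary function \eqref{Equation:DefinitionOfCSIWithoutTheParameter} of the \emph{new} projections $\pi_{i}(t)^{\perp},\dots,\pi_{1}(t)^{\perp}$. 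I would argue by induction on $i$, substituting the explicit formula \eqref{Equation:DefinitionOfHijt} for $H_{i,j}(t)$ into $\alpha^{(k)}_{i+1,j}(t)$ and regrouping: the purpose of the binomial weights $\binom{i}{s}t^{s-k}(t-1)^{i-s}$ is precisely that the regrouping expresses the sum as the action of the operator $\prod_{l\le i}(\pi_{l}+t\pi_{l}^{\perp})$ on the original vectors \eqref{Equation:FirstEquationInTheorem:Theorem11FromFerSimWood09}, the factor $(t-1)^{i-s}$ converting the ``$t$-degree'' grading produced by that operator into the ``row'' grading of the array, and the shift by the block index $k=k(j)$ accounting for the column of the echelon form at which $H_{\bullet,j}$ starts. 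The delicate points are the interaction between the non-orthogonal conjugation by $G_{i-1}(t)$ occurring in $P_{i}(t)$ and the \emph{orthogonal} projections $\pi_{l}(t)^{\perp}$ entering $C^{i}_{s}(t)$ --- which is exactly what the recursive unitarisation of the previous step must supply --- together with the combinatorial bookkeeping that the binomial coefficients telescope as claimed. A last, routine point for $M=S^{2}$ is that all the subbundles involved extend smoothly over the finitely many points where the meromorphic data degenerate, so that the Iwasawa decomposition is globally available throughout.
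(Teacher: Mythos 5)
Your overall framework is the same as the paper's: both arguments reduce the theorem to showing that the unitary (Iwasawa) part of the flowed complex extended solution $t\phi^{\mathfrak{H}}$ coincides with the extended solution $\phi^{\mathfrak{H}(t)}$ built from the deformed array, and your telescoping identity $(t\phi)_\lambda=\prod_i G_{i-1}(t)\,a_i(\lambda)\,G_{i-1}(t)^{-1}$ is a correct and tidy way of writing $t\phi$. But the argument stops exactly where the theorem begins. The step you yourself flag as ``the main obstacle'' --- identifying the subbundles obtained by unitarising this product with the bundles $\underline{\alpha}_{i}(t)$ that Theorem \ref{Theorem:Theorem11FromFerSimWood09} attaches to $\mathfrak{H}(t)$, i.e.\ verifying that the binomial weights in \eqref{Equation:DefinitionOfHijt} are exactly the right ones --- is not an administrative regrouping: it is the entire mathematical content of the theorem, and in the paper it occupies Sections \ref{Section:ProofOfMainResult}--\ref{Section:Appendix}. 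There it is carried out by proving that $\eta_r=\bigl(\phi^{\mathfrak{H}(t)}\bigr)^{-1}\phi^{\mathfrak{H}}_{\lambda t}$ has no negative Fourier modes, via a double induction (Lemmas \ref{Lemma:LemmaToProveTheAzCase} and \ref{Lemma:LemmaOfEquation:FirstHypothesisEquationVersionTwo}, supported by Lemmas \ref{Lemma:LemmaToProve:LemmaToProveTheAzCase}, \ref{Lemma:LemmaForTheFirstPart}, \ref{Lemma:GoingUpAndDownOnEtas} and \ref{Lemma:CompleteLemmaToProveLemma:LemmaOfEquation:FirstHypothesisEquationVersionTwo}) that requires the auxiliary families $\beta$, $\hat{\beta}$, the operators $A^r_j$ and $\eta_r^{k,l}$, and several nontrivial binomial identities. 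Nothing of comparable substance appears in your outline, which only asserts that ``the binomial coefficients telescope as claimed''; saying that the factor $(t-1)^{i-s}$ and the echelon shift $k(j)$ ``account for'' the structure is a plan, not a proof.

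There is also a smaller gap in your unitarisation step. Extracting $(\pi_{\image P_1}+\lambda\pi_{\image P_1}^\perp)$ from the first factor leaves a genuinely $\lambda$-dependent $r_+(\lambda)\in\Lambda_+$; conjugating the next factor $P_2(t)+\lambda P_2(t)^\perp$ by it produces an element of $\Lambda_+$ which is in general no longer linear in $\lambda$, so the claim that the procedure recursively returns a product of uniton factors $\pi_{\underline{\alpha}_i(t)}+\lambda\pi_{\underline{\alpha}_i(t)}^\perp$ with identifiable bundles is itself unproven --- it is, in effect, equivalent to what you are trying to establish. The paper avoids this by never unitarising $t\phi^{\mathfrak{H}}$ directly: it compares it with the already-unitary candidate $\phi^{\mathfrak{H}(t)}$ and checks coefficient by coefficient that $\bigl(\phi^{\mathfrak{H}(t)}\bigr)^{-1}\phi^{\mathfrak{H}}_{\lambda t}\in\Lambda_+$, the key inductive point being that $\eta_{r'}^0$ maps $\underline{\alpha}_{r'+1}$ into $\underline{\alpha}_{r'+1}(t)$ (equation \eqref{Equation:FirstHypothesisEquationVersionTwo}). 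So: right reduction and a correct first computation, but the combinatorial core --- precisely what singles out the formula \eqref{Equation:DefinitionOfHijt} from any other guess --- is missing.
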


We remark that, when the parameter $t$ goes to zero, the one parameter family of arrays $\mathfrak{H}(t)$ ends up in a diagonal array $\mathfrak{H}(0)$. Therefore, reversing the procedure, i.e., departing form an array $\mathfrak{H}_0$ in diagonal form, corresponding to an isotropic harmonic map with uniton number $r$, and filling in the entries of each non zero line below the diagonal, we obtain all harmonic maps with the same uniton which flow into $\varphi^{\mathfrak{H}}_0$. In this way we see how all the harmonic maps with uniton number $r$ can be explicitly built up from the isotropic harmonic maps with the same uniton number.

The extended solution of an isotropic harmonic map $\varphi^{\mathfrak{H}_0}$, up to a discrete subset of $S^2$, takes values in a conjugacy class $\Omega _{\gamma }$, for some geodesic $\gamma: S^1 \rightarrow U(n)$, where $\Omega_{\gamma }= \{ g \gamma g^{-1} : g \in G \}$ is a critical manifold of the Morse Bott function $E$. It is well known that $\Omega U(n) = \Omega_{alg}U(n)$ is a disjoint union of the so called "unstable manifolds" $U_{\gamma }$, where $U_{\gamma }$ is the domain of attraction of the critical manifold $\Omega _{\gamma}$ under the flow of $\nabla E$.

Burstall and Guest \cite{BurstallGuest:97} showed that, up to a discrete subset of $S^2$, every extended solution takes values in a single unstable manifold $U_{\gamma}$. Starting with an isotropic harmonic map $\varphi^{\mathfrak{H}_0}$ whose extended solution takes values almost everywhere in $\Omega _{\gamma}$ and denoting by $u_{\gamma}:U_{\gamma} \rightarrow \Omega _{\gamma}\subset \Omega $ the map assigning to each $\eta \in U_{\gamma}$ the corresponding end point of the flow line of $\nabla E$, we have $u_{\gamma} \circ \varphi^{\mathfrak{H}_t}= \varphi^{\mathfrak{H}_0}$ almost everywhere. Then the above procedure explicitly describes all harmonic maps which culminate into a given $\varphi^{\mathfrak{H}_0}$.

\begin{example}
Let us start with the array
\[\left[
\begin{array}{ccc}
H_{01} & 0 & 0 \\
0 & H_{12} & 0 \\
0 & 0 & H_{23}%
\end{array}%
\right]\]
corresponding to a basic harmonic map $\varphi =(\pi _{1}-\pi_{1}^{\perp })(\pi _{2}-\pi _{2}^{\perp })(\pi _{3}-\pi _{3}^{\perp })$ of
uniton number $3$, where
\[\begin{array}{ll}
\alpha _{1}&=\text{span}\left\{ H_{01}\right\},\\
\alpha _{2}&=\alpha _{1}\oplus\text{span}\left\{ \pi _{1}^{\perp }H_{01}^{\prime },\pi _{1}^{\perp}H_{12}\right\}\quad\text{  and}\\
\alpha _{3}&=\alpha _{2}\oplus \text{span}\left\{\pi_{2}^{\perp }\pi _{1}^{\perp }H_{01}'',\pi _{2}^{\perp }\pi_{1}^{\perp }H_{12}',\pi_{2}^{\perp }\pi _{1}^{\perp}H_{23}\right\}.
\end{array}\]
Now, take the array
\[\left[
\begin{array}{ccc}
H_{01} & 0 & 0 \\
H_{11} & H_{12} & 0 \\
H_{21} & H_{22} & H_{23}
\end{array}
\right]\]

corresponding to the harmonic map $\widetilde{\varphi }=(\pi _{\widetilde{\alpha }_{1}}-\pi _{\widetilde{\alpha }_{1}}^{\perp })(\pi _{\widetilde{\alpha }_{2}}-\pi _{\widetilde{\alpha }_{2}}^{\perp })(\pi _{\widetilde{\alpha }_{3}}-\pi _{\widetilde{\alpha }_{3}}^{\perp })$. $\widetilde{\varphi }$ ``flows'' into $\varphi$ through the one parameter family $\widetilde{\varphi }_{t}=(\pi _{\widetilde{\alpha }_{1}(t)}-\pi _{\widetilde{
\alpha }_{1}(t)}^{\perp })(\pi _{\widetilde{\alpha }_{2}(t)}-\pi _{%
\widetilde{\alpha }_{2}(t)}^{\perp })(\pi _{\widetilde{\alpha }_{3}(t)}-\pi
_{\widetilde{\alpha }_{3}(t)}^{\perp })$, where

\renewcommand{\arraystretch}{1.3}
\[\begin{array}{ll}
\widetilde{\alpha }_{1}(t)&=\alpha _{1},\\
\widetilde{\alpha }_{2}(t)&=\text{span}\left\{ H_{01}+\pi _{1}^{\perp }(tH_{11}+(t-1)H_{01}),\pi _{1}^{\perp
}H_{01}',\pi _{1}^{\perp }H_{12}\right\}\\
~&=\text{span}\left\{ H_{01}+t\pi _{1}^{\perp}H_{11},\pi _{1}^{\perp
}H_{01}',\pi _{1}^{\perp }H_{12}\right\}\\
\widetilde{\alpha}_{3}(t)&=\text{span}\left\{H_{01}+C_{1}^{2}H_{11}(t)+C_{2}^{2}H_{21}(t),C_{1}^{2}H_{01}'+C_{2}^{2}H_{11}'(t),C_{1}^{2}H_{12}+C_{2}^{2}H_{22}(t),C_{2}^{2}H_{01}'',C_{2}^{2}H_{23}\right\}\\
\end{array}\]\renewcommand{\arraystretch}{1.0}
We now have that\renewcommand{\arraystretch}{1.3}
\[\begin{array}{ll}~&H_{01}+C_{1}^{2}H_{11}(t)+C_{2}^{2}H_{21}(t)\\
=&H_{01}+C_{1}^{2}(tH_{11}+(t-1)H_{01})+C_{2}^{2}(t^2H_{21}+2t(t-1)H_{11}+(t-1)^2H_{01})\\
=&H_{01}+tC_{1}^{2}H_{11}+C_{2}^{2}(t^2H_{21}+t(t-1)H_{11})+\pi_2^\perp((t-1)H_{01}+\pi_1^\perp t(t-1)H_{11})\\
=&H_{01}+tC_{1}^{2}H_{11}+C_{2}^{2}(t^{2}H_{21}+t(t-1)H_{11})
\end{array}\]\renewcommand{\arraystretch}{1.0}
Analogously,\renewcommand{\arraystretch}{1.3}
\[\begin{array}{ll}
C_{1}^{2}H_{01}'+C_{2}^{2}H_{11}'(t)&=C_{1}^{2}H_{01}'+tC_{2}^{2}H_{11}'\quad\text{and}\\
C_{1}^{2}H_{12}+C_{2}^{2}H_{22}(t)&=C_{1}^{2}H_{12}+tC_{2}^{2}H_{22}
\end{array}\]\renewcommand{\arraystretch}{1.3}
so that
\[\begin{array}{ll}\widetilde{\alpha}_{3}(t)&=\textrm{span}\left\{H_{01}+tC_{1}^{2}H_{11}+C_{2}^{2}(t^{2}H_{21}+t(t-1)H_{11}),\right.\\
~&\quad\left. C_{1}^{2}H_{01}'+tC_{2}^{2}H_{11}',C_{1}^{2}H_{12}+tC_{2}^{2}H_{22},C_{2}^{2}H_{01}'',C_{2}^{2}H_{23}\right\}
\end{array}\]\renewcommand{\arraystretch}{1.0}

Of course when $t$ goes to $0$ we go back to the initial basic harmonic map.
\end{example}

\begin{remark}

Regarding the $G_{\star }(\mathbb C^n)$ valued harmonic maps $\varphi=(\pi_{F_0}-\pi_{F_{0}^{\perp}})(\pi_1
- \pi_{1}^{\perp})...(\pi_r - \pi_{r}^{\perp})$ with uniton number $r$ we recall \cite{FerreiraSimoes:12}  that the entries of an inicial array $\mathfrak{H}$ for this harmonic map are built up from an $r \times n$ $F_0$-array of meromorphic functions $\mathfrak{K}^{\varphi}= (K_{i,j})$, where $1 \leq i \leq r-1$ and $1 \leq j \leq n$,   according to the rule

\begin{equation}
\begin{array}{l}
H_{0,j}=K_{0,j}\text{ and}\\
H_{i,j}=\displaystyle\sum_{s=1}^i(-1)^{s+i}\binom{i-1}{s-1}K_{s,j}\text{,  }i\geq 1.
\end{array}
\end{equation}

Now it is a matter of standard algebraic computations to see that our array $\mathfrak{H}(t)$ is equivalent to the array $\widetilde{\mathfrak{H}}(t)$  built out of the family $\mathfrak{K}(t)=(K_{i,j}(t))$ according with the rule described above, where $K_{i,j}(t)=t^{i-k}K_{i,j}$ for $d_k \leq j < d_{k+1}$.

Of course, when $\varphi$ has values in $G_p(\mathbb C^n)$ all the one parameter family $\varphi_t$ is also $G_p(\mathbb C^n)$--valued.
\end{remark}

\section{Proof of Theorem \ref{Theorem:MainTheorem}}\label{Section:ProofOfMainResult}


Let $\varphi^{\mathfrak{H}}:S^2\rightarrow U(n)$ be a harmonic map of uniton number $r$, built out of an array $\mathfrak{H}$ as in \eqref{Equation:ArrayHInEchelonForm}.

As before, we consider its complex extended solution $\phi^{\mathfrak{H}}:S^2\rightarrow \Omega(Gl(\mathbb{C}^n))$. Following the flow lines of the energy $E:\Omega(Gl(\mathbb{C}^n))\to\mathbb{R}$, the flow $\phi_t:S^2\rightarrow Gl(\mathbb{C}^n)$ starting at $\phi_1=\phi^{\mathfrak{H}}$ is given by $t\phi^{\mathfrak{H}}$ (see \eqref{Equation:CStarAction}).

From \cite{Uhlenbeck:89}, we know that $t\phi^{\mathfrak{H}}$ is an extended solution. Since the diffeomorphism
\begin{equation*}
\Omega (U(n)) \times \Lambda _{+} \rightarrow \Lambda Gl(\mathbb{C}^n)
\end{equation*}
preserves extended solutions \cite{DPW}, $\phi_t=(t\phi^{\mathfrak{H}})_u$ is an extended solution of a harmonic map $\varphi_t :S^2\rightarrow U(n)$. To conclude that $\varphi_t$ is the harmonic map built out of the array $\mathfrak{H}(t)$ as in Theorem \ref{Theorem:MainTheorem}, we just have to show that $\left(\phi^{\mathfrak{H}(t)}\right)^{-1} \phi_t \in \Lambda_{+}$,
where $\phi^{\mathfrak{H}(t)}$ denotes the extended solution associated to $\varphi^{\mathfrak{H}(t)}$.

To see this we consider $\left(\phi^{\mathfrak{H}(t)}\right)^{-1} \phi_t = \eta_r$ and, for each integer $k$, $\eta_{r}^{k}$ will denote the coefficient of $\lambda^k$. Everything will be settled proving that $\eta_{r}^{k}=0$ if $k<0$.

Before going into a detailed proof of this theorem, we consider first the cases $r=1$ and $r=2$.

\begin{example}\label{Example:UnitonNumberOneExample}
Let $\varphi:S^2\to U(n)$ be a harmonic map of uniton number $1$. Then, $\varphi=\pi_1-\pi_1^\perp$, where $\alpha_1=\text{span}\{H_{0j}\}$. Since $H_{0j}(t)=H_{0j}$, we get $\underline{\alpha}_1(t)=\underline{\alpha}_1$ and therefore $\pi_{\underline{\alpha}_1(t)}=\pi_{1,t}=\pi_1$ so that $\varphi(t)\equiv \varphi$. In particular, as it is well-known, uniton number one harmonic maps are $S^1$-invariant.

Also notice that
\[\hat{\Phi}_{\lambda^{-1}}\Phi_{\lambda t}=(\hat{\pi}_1+\lambda^{-1}\hat{\pi}_1^\perp)(\pi_1+\lambda t\pi_1^\perp)=(\pi_1+\lambda^{-1}\pi_1^\perp)(\pi_1+\lambda t\pi_1^\perp)=\pi_1+t\pi_1^\perp\]
has no negative powers of $\lambda$. We conclude that $\eta_1=\eta_1^{0}$. Hence, the first case is proved.

\end{example}

From now on, we will use the notation $\pi_{it}$ and $\pi_{it}^\perp$ to denote the orthogonal projections $\pi_{\underline{\alpha}_i(t)}$ and $\pi_{\underline{\alpha}_i^\perp(t)}$, respectively. In the same way, given the subbundles $\underline{\alpha}_i(t)$, we use the notation $C^i_s(t)$ ($s\leq i$) to denote the $s$'th elementary function of $\pi_{it}$.

\begin{example}\label{Example:UnitonNumberTwoExample}
For the uniton-two case, we start with $\underline{\alpha}_1$ as above and $\underline{\alpha}_2=\text{span}\{H_{0j_1}+\pi_1^\perp H_{1j_1},\pi_1^\perp H_{0j_1}',\pi_1^\perp H_{1j_2}\}$. Considering\renewcommand{\arraystretch}{1.3}
\[\begin{array}{ll}
\underline{\alpha}_2(t)&=\text{span}\{H_{0j_1}(t)+\pi_{1t}^\perp H_{1j_1}(t),\pi_{1t}^\perp H_{0j_1}'(t),\pi_{1t}^\perp H_{1j_2}(t)\}\\
~&=\text{span}\{H_{0j_1}+\pi_1^\perp((t-1)H_{0j_1}+tH_{1j_1}),\pi_1^\perp H_{0j_1}',\pi_1^\perp H_{1j_2}\}.\end{array}\]\renewcommand{\arraystretch}{1.0}

We have that\renewcommand{\arraystretch}{1.3}
\[\begin{array}{lll}
~     & \eta_2&=(\pi_{2,t}+\lambda^{-1}\pi_{2,t}^\perp)\eta_1^{0}(\pi_2+\lambda t\pi_2^\perp)\\
\impl & \eta_2^{-1}&=\pi_{2,t}^\perp\eta_1^0\pi_2,\,\eta_2^{0}=\pi_{2,t}\eta_1^0\pi_2+t\pi_{2,t}^\perp\eta_1^0\pi_2^\perp\,\text{and}\,\eta_2^1=t\pi_{2,t}^\perp\eta_1^0\pi_2^\perp.
\end{array}\]\renewcommand{\arraystretch}{1.0}

Notice that for any pair of functions $(V_m,V_{m+1})$,\renewcommand{\arraystretch}{1.3}
\begin{equation}\label{Equation:FirstEquationForTheUnitonOneCase}
\begin{array}{ll}
\eta_1^0(V_m+\pi_1^\perp V_{m+1})&=(\pi_1+t\pi_1^\perp)(V_m+\pi_1^\perp V_{m+1})=(\pi_1+t\pi_1^\perp)V_m+t\pi_1^\perp V_{m+1}\\
~&=V_m+\pi_1^\perp\big((t-1)V_m+tV_{m+1}\big)
\end{array}
\end{equation}\renewcommand{\arraystretch}{1.0}

Therefore, from \eqref{Equation:FirstEquationForTheUnitonOneCase}, we have that\renewcommand{\arraystretch}{1.3}
\[\begin{array}{rl}
\pi_{2,t}^\perp\eta_1^0 (H_{0j}+\pi_1^\perp H_{1j})&=\pi_{2,t}^\perp\big(H_{0j}+\pi_1^\perp\big((t-1)H_{0j}+tH_{1j}\big)\big)=0\,\,\text{and}\\
\pi_{2,t}^\perp\eta_1^0 (\pi_1^\perp H_{0j}')           &=\pi_{2,t}^\perp(\pi_1^\perp tH_{0j}')=0,
\end{array}\]
\renewcommand{\arraystretch}{1.0}

so that $\eta_2^{-1}=0$ and thus showing that $\eta_2\in\Lambda_+$.
\end{example}

Given any family $(V_{m},...,V_{m'})$ of $\mathbb{C}^n$-valued meromorphic functions defined on $M^2$ we define an auxiliary 1-parameter family of $\mathbb{C}^n$ valued maps by
\begin{equation}\label{Equation:DefinitionOfVijkt}
V_{i}^k(t)=\underbrace{\sum_{l=0}^k{k\choose l}(t-1)^{k-l}t^lV_{i+l}}_{\text{\tiny{starts in $V_{i}$ and ends in $V_{i+k}$}}},\,i\in\mathbb{Z},\,k\geq 0
\end{equation}

where we interpret $V_{ij}=0$ if $i<m$ or $i>m'$.

Notice that when the family $(V_m,...,V_{m'})$ is the $j$-th column $(H_{0,j},....,H_{i,j})$ ($d_0<j\leq d_1$) of $\mathfrak{K}$, then we have that, in this notation,
\[H_{0,j}^i(t)=\sum_{l=0}^i{i\choose l}(t-1)^{i-l}t^lH_{l,j}\]
which is exactly our $H_{i,j}(t)$ (see \eqref{Equation:DefinitionOfHijt}). We introduce this auxiliary family of $V$'s as the proof of Theorem \ref{Theorem:MainTheorem} relies on an algebraic Lemma valid for any family of $\mathbb{C}^n$ valued functions.

For the array $(V_{m},...,V_{m'})$ we define, for $k=0,1$,
\begin{align}
\beta_{i+1}^{k}&=\displaystyle{\sum_{s=0}^i C_{s}^i V_{m+s+k}},\quad\text{as well as} \label{Equation:DefinitionsOfBetas:Equation1}\\
\hat{\beta}_{i+1}^{k}(t)&=\displaystyle{\sum_{s=0}^i C_{s}^i(t) V_{m+k}^{s}(t)}\quad\text{and} \label{Equation:DefinitionsOfBetas:Equation2}\\
\beta_{i+1}^{k}(t)&=\displaystyle{\sum_{s=0}^i C_{s}^i(t) V_{m}^{s+k}(t)}\label{Equation:DefinitionsOfBetas:Equation3}
\end{align}
where again we interpret $V_{i}=0$ if $i<m$ or $i>m'$. We also notice that the $C_s^i(t)$ are defined as in \eqref{Equation:DefinitionOfCSIWithoutTheParameter} but with the underlying subbundles $\underline{\alpha}_i$ replaced by $\underline{\alpha}_i(t)$.

If $(V_{m},...,V_{m'})=(H_{0j},...,H_{ij})$ (eventually, some entries null if $j>d_1$), then
\begin{equation}\label{Equation:CaseVmVmEqualsHoHi}
\begin{array}{rl}
\beta_{i+1}^0&=\alpha_{i+1,j}^{(0)},\quad\text{and}\\
\hat{\beta}_{i+1}^0(t)&=\beta_{i+1}^0(t)=t^k\alpha_{i+1,j}^{(0)}(t),\,d_k <j \leq d_{k+1}
\end{array}
\end{equation}

Moreover, if $(V_{m},...,V_{m'})=(0_0,...,0_{k-1},H_{0j}^{(k)},...,H_{i-k,j}^{(k)})$, then
\begin{equation}\label{Equation:CaseVmVmEqualsHoHiCaseDerivatives}
\beta_{i+1}^0=\alpha_{i+1,j}^{(k)}
\end{equation}

Besides, we also have the following Lemma, which proof is presented in Section \ref{Section:Appendix}.
\begin{lemma}\label{Lemma:LemmaToProveTheAzCase} For $d_0< j\leq d_1$,
\begin{equation}\label{Equation:EqInLemmaToProveTheAzCase}
\beta_{i}^0(t)(0_0,...,0_{k-1},H_{0j}^{(k)},...,H_{i-k-1,j}^{(k)})=t^k\alpha_{ij}^{(k)}(t).
\end{equation}
More generally, for $d_l< j\leq d_{l+1}$,
\[\beta_{i}^0(t)(0_0,...,0_{k-1},\underbrace{H_{0j}^{(k)},...,H_{l-1,j}^{(k)}}_{\text{\tiny{$=0$}}},H_{lj}^{(k)}...,H_{i-k-1,j}^{(k)})=t^{k-l}\alpha_{ij}^{(k)}(t).\]
\end{lemma}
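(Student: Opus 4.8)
The statement is a purely algebraic identity comparing two expressions built from elementary functions $C^i_s$ of projections and from the $t$-deformed meromorphic data. My strategy is to prove the more general (second) identity by induction on $i$, with the first identity being the special case $l=0$; the passage from one row of the array to the next is exactly multiplication by one extra projection factor, and the combinatorial heart is an addition-formula for the auxiliary family $V^k_i(t)$ defined in \eqref{Equation:DefinitionOfVijkt}.

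\emph{First}, I would record the key recursions. On the side of the elementary functions one has the standard splitting
\[
C^i_s(t)=C^{i-1}_s(t)+\pi^\perp_{it}\,C^{i-1}_{s-1}(t),
\]
obtained by separating, in the defining sum \eqref{Equation:DefinitionOfCSIWithoutTheParameter}, the terms that do contain the index $i$ from those that do not. On the side of the $V$'s one has the binomial/Pascal-type identity
\[
V^{s}_m(t)=(t-1)\,V^{s-1}_m(t)+t\,V^{s-1}_{m+1}(t),
\]
which is immediate from \eqref{Equation:DefinitionOfVijkt} and is the discrete analogue of $(t-1)+t\cdot(\text{shift})$. Substituting these two recursions into the definition \eqref{Equation:DefinitionsOfBetas:Equation3} of $\beta^0_{i+1}(t)$ and reorganising the double sum should express $\beta^0_{i+1}(t)$ applied to a given family in terms of $\beta^0_i(t)$ applied to the \emph{shifted} family, plus a correction term carrying a factor $\pi^\perp_{it}$ in front. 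This is precisely the structure already visible in Example~\ref{Example:UnitonNumberTwoExample} and in the worked computation following Theorem~\ref{Theorem:MainTheorem}: terms of the form $\pi^\perp_{it}\big((t-1)(\cdots)+\pi^\perp_{i-1,t}\,t(\cdots)\big)$ appear and then collapse.

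\emph{Second}, I would set up the induction. For $i=1$ (one projection factor) the identity reduces to $\beta^0_1(t)(H^{(k)}_{lj},\ldots)=C^0_0(t)V^k_m(t)=V^k_m(t)$, and unwinding \eqref{Equation:DefinitionOfVijkt} together with $H^{(k)}_{0,j}=\cdots=H^{(k)}_{l-1,j}=0$ gives directly $t^{k-l}\alpha^{(k)}_{1j}(t)$, since $\alpha^{(k)}_{1j}(t)$ is itself a truncation of the same binomial sum — this is the content of \eqref{Equation:CaseVmVmEqualsHoHi} at level $1$. For the inductive step, I apply the $C^i_s(t)$ recursion, split $\beta^0_{i+1}(t)$ into a "lower" part that by the induction hypothesis equals $t^{k-l}\alpha^{(k)}_{i,\,\cdot}(t)$ on the relevant family, and a "$\pi^\perp_{it}$" part; the latter, using the $V$-recursion, is recognised as $\pi^\perp_{it}$ applied to $\beta^0_i(t)$ of the once-shifted family, which by induction is again a multiple of an $\alpha^{(k)}_{i,\cdot}(t)$. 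Re-assembling via $\alpha^{(k)}_{i+1,j}(t)=\sum_{s\geq k}C^i_s(t)H^{(k)}_{s-k,j}(t)$ and its own one-step recursion closes the loop; the bookkeeping of the exponent $k-l$ is automatic because shifting the family by one lowers the effective "$l$" by one while the extra factor of $t$ from the $V$-recursion compensates.

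\emph{The main obstacle} is the index bookkeeping, not any deep idea: one must carefully track (i) the zero padding $0_0,\ldots,0_{k-1}$ and the further $l$ zeros $H^{(k)}_{0,j}=\cdots=H^{(k)}_{l-1,j}=0$ coming from the echelon shape \eqref{Equation:ArrayHInEchelonForm}, since these shift where the nonzero entries start and thus govern the power $t^{k-l}$; and (ii) the interaction between the two recursions when the double sum in \eqref{Equation:DefinitionsOfBetas:Equation3} is reindexed, making sure the "correction" term really reorganises into $\pi^\perp_{it}\,\beta^0_i(t)(\text{shifted family})$ with no leftover cross-terms. I expect that writing $\beta^0_{i+1}(t)=\beta^0_i(t)+\pi^\perp_{it}\gamma_i(t)$ for an explicitly identified $\gamma_i(t)$ and checking that $\gamma_i(t)$ equals $\beta^0_i(t)$ of the shifted family is the cleanest route, after which the whole statement follows by a one-line induction. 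The degenerate cases $j>d_1$ (some columns of the array vanish) require no separate argument, since the convention $V_i=0$ for $i\notin[m,m']$ makes every formula above still valid verbatim.
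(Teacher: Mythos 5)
Your setup is sound and coincides with the paper's: the recursions $C^i_s(t)=C^{i-1}_s(t)+\pi^\perp_{it}C^{i-1}_{s-1}(t)$ and $V^{s}_m(t)=(t-1)V^{s-1}_m(t)+tV^{s-1}_{m+1}(t)$ do combine to give exactly the identity \eqref{Equation:EquationBeforeLemmaToProveTheAzCase}, namely $\beta^0_{i+1}(t)(V_0,\ldots,V_i)=\beta^0_{i}(t)(V_0,\ldots,V_{i-1})+\pi^\perp_{it}\bigl(t\,\beta^0_i(t)(V_1,\ldots,V_i)+(t-1)\,\beta^0_i(t)(V_0,\ldots,V_{i-1})\bigr)$, and induction on $i$ is the right frame. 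The gap is in the step you dismiss as automatic. In the inductive step with $(V_0,\ldots,V_i)=(0_0,\ldots,0_{k-1},H^{(k)}_{0j},\ldots,H^{(k)}_{i-k,j})$, the lower term and the $(t-1)$-term are indeed handled by the induction hypothesis (the latter dying under $\pi^\perp_{it}$ since $\pi^\perp_{it}\alpha^{(k)}_{ij}(t)=0$). But the remaining term involves $\beta^0_i(t)$ evaluated on the \emph{shifted} family $(0_0,\ldots,0_{k-2},H^{(k)}_{0j},\ldots,H^{(k)}_{i-k,j})$, which has $k-1$ leading zeros paired with $k$-th derivatives; this is \emph{not} an instance of the lemma being proved, so your assertion that "by induction [it] is again a multiple of an $\alpha^{(k)}_{i,\cdot}(t)$" is unjustified, and indeed false in general: that vector is not a multiple of a spanning vector of $\underline{\alpha}_i(t)$.

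What actually has to be shown is that $\pi^\perp_{it}\bigl(t\,\beta^0_i(t)(0_0,\ldots,0_{k-2},H^{(k)}_{0j},\ldots,H^{(k)}_{i-k,j})-t^{k}\sum_{s=k-1}^{i-1}C^{i-1}_s(t)H^{(k)}_{s-k+1,j}(t)\bigr)=0$, and this is where the paper's real work lies: a resummation of binomial coefficients (via $\binom{i-j}{l}=\sum_s\binom{k}{s}\binom{i-(k+j)}{l-s}$, repeatedly peeling off factors of $(t-1)$) reduces the leftover to combinations of the form $\sum_{s=0}^{l}C^{i-1}_{i-l-1+s}(t)H^{(k)}_{s,j}(t)$, whose vanishing is a separate annihilation result, Lemma~\ref{Lemma:LemmaToProve:LemmaToProveTheAzCase}, itself proved by its own induction using the fact that the composite deformed projections kill the deformed vectors $\alpha^{(k)}_{\cdot}(t)$. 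Neither this auxiliary lemma nor any substitute for it appears in your proposal, so the induction does not close as a "one-line" argument; the cross-terms you hope cancel are precisely the content of the hardest part of the proof. (Your closing remark about the general case $d_l<j\leq d_{l+1}$ is fine: as in the paper, it follows from the $d_0<j\leq d_1$ case by rescaling the defining vectors by the appropriate power of $t$.)
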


The idea to prove Theorem \ref{Theorem:MainTheorem} is to establish an induction that allows one to conclude that $\eta$ has no negative powers of $\lambda$. More precisely, we shall establish the following Lemma, which we prove in Section \ref{Section:Appendix}.
\begin{lemma}\label{Lemma:LemmaOfEquation:FirstHypothesisEquationVersionTwo}
For any $r'$, if $\eta_{r'}$ has no $\lambda$ negative powers, there are  $\text{End}(\mathbb{C}^n)$--valued $1$-forms $A^r_j$ (see \eqref{Equation:DefinitionOfArk}) such that for any fixed collection $(V_m,..., V_{m+r'})$,
\begin{equation}\label{Equation:FirstHypothesisEquationVersionTwo}
\eta_{r'}^0(\beta_{r'+1}^0)=\beta_{r'+1}^0(t)+\sum_{s=0}^{r'}\pi_{s,t}^\perp\beta_s^0(t)-\sum_{s=2}^{r'}A_s^{r'}\pi_s^\perp\beta_s^0.
\end{equation}
\end{lemma}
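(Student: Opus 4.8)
The plan is to run an induction on $r'$, exactly parallel to the passage from the uniton-one case (Example \ref{Example:UnitonNumberOneExample}) to the uniton-two case (Example \ref{Example:UnitonNumberTwoExample}). The key structural fact is the recursion $\eta_{r'+1}=(\pi_{r'+1,t}+\lambda^{-1}\pi_{r'+1,t}^\perp)\,\eta_{r'}\,(\pi_{r'+1}+\lambda t\,\pi_{r'+1}^\perp)$ coming from the uniton factorisations of $\phi^{\mathfrak H(t)}$ and of $\phi_t=t\phi^{\mathfrak H}$; granting the inductive hypothesis that $\eta_{r'}$ has no negative powers of $\lambda$, i.e.\ $\eta_{r'}=\eta_{r'}^0$ (constant in $\lambda$), one reads off $\eta_{r'+1}^{-1}=\pi_{r'+1,t}^\perp\,\eta_{r'}^0\,\pi_{r'+1}$, and the whole point is to show this vanishes. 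Since $\underline\alpha_{r'+1}$ is spanned by vectors of the form $\beta_{r'+1}^0$ associated to the columns $(H_{0,j},\dots,H_{r'-1,j})$ and their derivatives (equations \eqref{Equation:CaseVmVmEqualsHoHi}, \eqref{Equation:CaseVmVmEqualsHoHiCaseDerivatives}), it suffices to evaluate $\eta_{r'}^0$ on an arbitrary $\beta_{r'+1}^0$, which is precisely what \eqref{Equation:FirstHypothesisEquationVersionTwo} records; the two ``correction'' sums on the right are built out of the lower-order $\beta_s^0(t)$ (which span $\underline\alpha_{s}(t)$, killed by $\pi_{s,t}^\perp$ and a fortiori one expects to be controllable) and of terms $\pi_s^\perp\beta_s^0$ lying in the \emph{old} unitons.

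First I would set up the $1$-forms $A_j^r$ by the referenced formula \eqref{Equation:DefinitionOfArk} and unwind the definitions \eqref{Equation:DefinitionsOfBetas:Equation1}--\eqref{Equation:DefinitionsOfBetas:Equation3}: the generating-function identity $V_{m}^{s+k}(t)=\sum$ binomials relating the three families of $\beta$'s, together with $C_s^i(t)$ being the elementary functions of the \emph{deformed} projections, is the algebraic engine. Next I would feed the inductive hypothesis into $\eta_{r'}^0\big(\sum_s C_s^{r'}V_{m+s}\big)$ and commute $\eta_{r'}^0$ past each $C_s^{r'}$; here $\eta_{r'}^0=(\pi_1+t\pi_1^\perp)\cdots$-type products (as in \eqref{Equation:FirstEquationForTheUnitonOneCase}) interact with $\pi_i^\perp$ via the basic move $\eta_1^0(V_m+\pi_1^\perp V_{m+1})=V_m+\pi_1^\perp((t-1)V_m+tV_{m+1})$, iterated. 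Each application of this move is exactly what converts $V_{m+s}$ into the combination $V_m^{s}(t)$, i.e.\ it is the mechanism by which $\beta_{r'+1}^0(t)$ appears on the right-hand side; the leftover terms, by bookkeeping of which projections got ``used up'', are what assemble into $\sum_{s}\pi_{s,t}^\perp\beta_s^0(t)$ and $\sum_s A_s^{r'}\pi_s^\perp\beta_s^0$. Lemma \ref{Lemma:LemmaToProveTheAzCase} is used to handle the columns that start below the diagonal (the $d_l<j\le d_{l+1}$ case), supplying the powers of $t$ that make the diagonal/echelon structure consistent.

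The main obstacle I anticipate is precisely the combinatorial bookkeeping in the inductive step: tracking how the elementary symmetric functions $C_s^{r'}(t)$ of the deformed projections relate to $C_s^{r'}$ of the original ones as $\eta_{r'}^0$ is pushed through, and verifying that the error terms close up into exactly the two sums in \eqref{Equation:FirstHypothesisEquationVersionTwo} with the \emph{same} coefficients $A_s^{r'}$ for every choice of $(V_m,\dots,V_{m+r'})$ — uniformity in the $V$'s is what makes the lemma strong enough to be applied column-by-column to the spanning set of $\underline\alpha_{r'+1}$. Once \eqref{Equation:FirstHypothesisEquationVersionTwo} is in hand, one applies $\pi_{r'+1,t}^\perp$ on the left and $\pi_{r'+1}$ on the right: the first two groups of terms on the right are annihilated because $\beta_s^0(t)\in\underline\alpha_{s}(t)\subseteq\underline\alpha_{r'+1}(t)$ for $s\le r'+1$, and the last group is annihilated by $\pi_{r'+1}$ since $\beta_s^0\in\underline\alpha_{s}\subseteq\underline\alpha_{r'+1}$ — giving $\eta_{r'+1}^{-1}=0$ and completing the induction, hence $\eta_r\in\Lambda_+$ as required.
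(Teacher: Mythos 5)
Your proposal correctly identifies the outer scaffolding (induction on $r'$, the recursion $\eta_{r'+1}=(\pi_{r'+1,t}+\lambda^{-1}\pi_{r'+1,t}^\perp)\eta_{r'}(\pi_{r'+1}+\lambda t\pi_{r'+1}^\perp)$, and the role of the identity \eqref{Equation:FirstHypothesisEquationVersionTwo} in killing $\eta_{r'+1}^{-1}$), but it does not contain the actual content of the lemma, and it rests on a structural misconception. You write that the inductive hypothesis gives ``$\eta_{r'}=\eta_{r'}^0$ (constant in $\lambda$)'' and that $\eta_{r'}^0$ is a product of factors of the type $(\pi_1+t\pi_1^\perp)\cdots$, so that the identity should follow by iterating the one--uniton move \eqref{Equation:FirstEquationForTheUnitonOneCase}. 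Neither claim is true for $r'\geq 2$: absence of negative powers leaves all the positive coefficients $\eta_{r'}^1,\eta_{r'}^2,\dots$ alive (already $\eta_2^1=t\pi_{2,t}^\perp\eta_1^0\pi_2^\perp\neq 0$ in Example \ref{Example:UnitonNumberTwoExample}), and the recursion gives $\eta_{r'+1}^0=\pi_{r'+1,t}\eta_{r'}^0\pi_{r'+1}+\pi_{r'+1,t}^\perp\eta_{r'}^1\pi_{r'+1}+t\pi_{r'+1,t}^\perp\eta_{r'}^0\pi_{r'+1}^\perp$, so the inductive step forces you to evaluate $\eta_{r'}^1$ (and, iterating, the whole hierarchy $\eta_{r'}^{k,l}$ of \eqref{Equation:DefinitionofEtaRKL}) on $\beta_{r'+1}^0$. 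Controlling exactly these terms is the hard part of the paper's proof: it requires the shifted identity \eqref{Equation:SecondHypothesisEquationVersionTwo}, the relation \eqref{Equation:EquationIn:Lemma:LemmaForTheFirstPart} and Corollary \ref{Corollary:AfterLemmaForTheFirstPart} linking $\beta^0(t)$, $\hat\beta^1(t)$, $\beta^1(t)$, Lemma \ref{Lemma:GoingUpAndDownOnEtas}, the auxiliary identity \eqref{Equation:ThirdHypothesisEquationVersionTwo} of Lemma \ref{Lemma:CompleteLemmaToProveLemma:LemmaOfEquation:FirstHypothesisEquationVersionTwo}, the recursive definition \eqref{Equation:DefinitionOfArk} of the $A_j^r$, and the identity \eqref{Equation:AuxiliaryCalculusEq3}. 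Your plan defers all of this to ``combinatorial bookkeeping'' which you yourself flag as the anticipated obstacle; that bookkeeping \emph{is} the lemma, so the proposal has a genuine gap rather than an alternative route.

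A secondary error: in the application you kill the correction sums by applying $\pi_{r'+1,t}^\perp$ and $\pi_{r'+1}$ and invoking $\underline{\alpha}_s(t)\subseteq\underline{\alpha}_{r'+1}(t)$ and $\underline{\alpha}_s\subseteq\underline{\alpha}_{r'+1}$. The unitons are nested only for diagonal arrays, so this inclusion is not available in general. The paper needs no nesting: when $(V_m,\dots,V_{m'})$ is a column of $\mathfrak{H}$ (or a shifted column of derivatives), equations \eqref{Equation:CaseVmVmEqualsHoHi}, \eqref{Equation:CaseVmVmEqualsHoHiCaseDerivatives} and Lemma \ref{Lemma:LemmaToProveTheAzCase} show $\beta_s^0=\alpha_{s,j}^{(k)}\in\underline{\alpha}_s$ and $\beta_s^0(t)\in\underline{\alpha}_s(t)$ term by term, so each summand $\pi_s^\perp\beta_s^0$ and $\pi_{s,t}^\perp\beta_s^0(t)$ vanishes individually and \eqref{Equation:FirstHypothesisEquationVersionTwo} collapses to $\eta_{r'}^0(\beta_{r'+1}^0)=\beta_{r'+1}^0(t)$ before any projection is applied.
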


\textit{Proof of Theorem \ref{Theorem:MainTheorem}.}

Since
\[\eta_{r'+1}=(\pi_{r'+1,t}+\lambda^{-1}\pi_{r'+1,t}^\perp)\eta_{r'}(\pi_{r'+1}+\lambda t\pi_{r'+1}^\perp),\]
if $\eta_{r'}$ has no negative powers of $\lambda$, we shall have that
\[\eta_{r'+1}^{-1}=\pi_{r'+1,t}^\perp\eta_{r'}^0\pi_{r'+1}\]
so that $\eta_{r'+1}^{-1}$ will vanish as long as $\eta_{r'}^0$ maps $\underline{\alpha}_{r'+1}$ into $\underline{\alpha}_{r'+1}(t)$.

But applying \eqref{Equation:FirstHypothesisEquationVersionTwo} when $(V_m,...,V_m')=(H_{0j},...,H_{r'j})$ together with \eqref{Equation:CaseVmVmEqualsHoHi} yields that $\eta_{r'}^0$ maps $\alpha_{r'+1,j}^{(0)}$ into $\alpha_{r'+1,j}^{(0)}(t)$ since, in that case, \eqref{Equation:FirstHypothesisEquationVersionTwo} reduces to $\eta_{r'}^0(\beta_{r'+1}^0)=\beta_{r'+1}^0(t)$.

Applying \eqref{Equation:FirstHypothesisEquationVersionTwo} when $(V_m,...,V_m')=(0_0,...,0_{k-1},H_{0j}^{(k)}...,H_{r'-k,j}^{(k)})$ together with \eqref{Equation:CaseVmVmEqualsHoHiCaseDerivatives} and Lemma \ref{Lemma:LemmaToProveTheAzCase} shows that $\eta_{r'}^0$ maps $\alpha_{r'+1,j}^{(k)}$ into $\alpha_{r'+1,j}^{(k)}(t)$, since once again \eqref{Equation:FirstHypothesisEquationVersionTwo} reduces to $\eta_{r'}^0(\beta_{r'+1}^0)=\beta_{r'+1}^0(t)$. Hence, this concludes the proof of Theorem \ref{Theorem:MainTheorem}.

\qed

Therefore, we are now left with showing Lemmas \ref{Lemma:LemmaToProveTheAzCase} and \ref{Lemma:LemmaOfEquation:FirstHypothesisEquationVersionTwo}.


\section{Proof of Lemmas \ref{Lemma:LemmaToProveTheAzCase} and \ref{Lemma:LemmaOfEquation:FirstHypothesisEquationVersionTwo}}\label{Section:Appendix}


\begin{lemma}\label{Lemma:LemmaToProve:LemmaToProveTheAzCase} For a given $i$, we have that
\begin{equation}\label{Equation:EquationIn:Lemma:LemmaToProve:LemmaToProveTheAzCase}
\sum_{s=0}^lC^{i-1}_{i-l-1+s}(t)H_{st}^{(k)}=0,\quad\text{for all}\, 0\leq l\leq i-1
\end{equation}
\end{lemma}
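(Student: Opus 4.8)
The plan is to establish Lemma~\ref{Lemma:LemmaToProve:LemmaToProveTheAzCase} by a double induction: the outer one on $i$ and, for fixed $i$, an inner one on $l$, with the two extreme values $l=0$ and $l=i-1$ treated separately. The cases $i=1,2$ are direct and merely reproduce the computations of Examples~\ref{Example:UnitonNumberOneExample} and~\ref{Example:UnitonNumberTwoExample}; the only structural facts used throughout are Pascal's rule for the elementary functions \eqref{Equation:DefinitionOfCSIWithoutTheParameter} of the deformed projections and the defining property that $\pi_{mt}^\perp$ annihilates $\underline\alpha_m(t)$, every spanning vector $\alpha^{(k)}_{m,j}(t)$ of \eqref{Equation:FirstEquationInTheorem:Theorem11FromFerSimWood09} lying in $\underline\alpha_m(t)$. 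For $l=0$ the asserted identity reads $C^{i-1}_{i-1}(t)\,H_{0t}^{(k)}=0$, and since $C^{i-1}_{i-1}(t)=\pi_{(i-1)t}^\perp\cdots\pi_{1t}^\perp$ one peels the projections off from the right: the partial product $\pi_{(k+1)t}^\perp\pi_{kt}^\perp\cdots\pi_{1t}^\perp$ first builds up the spanning vector $\alpha^{(k)}_{k+1,j}(t)\in\underline\alpha_{k+1}(t)$ and the next projection $\pi_{(k+1)t}^\perp$ kills it.

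For the generic step I would apply Pascal's rule,
\[C^{i-1}_{m}(t)=C^{i-2}_{m}(t)+\pi_{(i-1)t}^\perp\,C^{i-2}_{m-1}(t),\]
to every coefficient in \eqref{Equation:EquationIn:Lemma:LemmaToProve:LemmaToProveTheAzCase} and thereby write its left-hand side as $S_1+\pi_{(i-1)t}^\perp S_2$, where
\[S_1=\sum_{s=0}^{l}C^{i-2}_{i-l-1+s}(t)\,H_{st}^{(k)},\qquad S_2=\sum_{s=0}^{l}C^{i-2}_{i-l-2+s}(t)\,H_{st}^{(k)}.\]
In $S_1$ the top summand carries the coefficient $C^{i-2}_{i-1}(t)=0$, and what is left is precisely the statement of the lemma for $i-1$ with $l$ replaced by $l-1$, so $S_1=0$ by the inductive hypothesis. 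If $l\le i-2$ the sum $S_2$ is, likewise, the statement of the lemma for $i-1$ with the same $l$, hence $S_2=0$ and the identity is proved. The one case that does not collapse this way is $l=i-1$: there $S_2=\sum_{s=1}^{i-1}C^{i-2}_{s-1}(t)\,H_{st}^{(k)}$, an index-shifted combination of the deformed data ranging over all elementary functions of $\underline\alpha_{i-1}(t)$.

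The step I expect to be the genuine obstacle is therefore showing $\pi_{(i-1)t}^\perp S_2=0$ when $l=i-1$. Here I would expand the deformed data by \eqref{Equation:DefinitionOfHijt}--\eqref{Equation:DefinitionOfVijkt}, interchange the summations, and invoke the Vandermonde/Pascal identity that re-sums the binomial weights $\sum_{p}\binom{s}{p}t^{p}(t-1)^{s-p}$ into exactly the coefficients occurring in the spanning vectors produced by \eqref{Equation:FirstEquationInTheorem:Theorem11FromFerSimWood09}. This should exhibit $S_2$ as a combination of vectors of $\underline\alpha_{i-1}(t)$ whose scalar multipliers are themselves lower instances of \eqref{Equation:EquationIn:Lemma:LemmaToProve:LemmaToProveTheAzCase}, already known to vanish, so that $\pi_{(i-1)t}^\perp$ annihilates the whole of $S_2$ and the induction closes. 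The delicate bookkeeping is in matching the index-shifted $C^{i-2}_{s-1}(t)$ against the binomial weights hidden in $H_{st}^{(k)}$ and in keeping track of the $k$ leading trivial entries produced by the $k$-th derivative --- equivalently, for a column $j$ in a block $d_k<j\le d_{k+1}$, of the vanishing of the first $k$ rows imposed by the echelon form \eqref{Equation:ArrayHInEchelonForm}. Once this is settled, Lemma~\ref{Lemma:LemmaToProve:LemmaToProveTheAzCase} feeds straight into the proofs of Lemmas~\ref{Lemma:LemmaToProveTheAzCase} and~\ref{Lemma:LemmaOfEquation:FirstHypothesisEquationVersionTwo}.
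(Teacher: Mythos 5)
Your base case and your generic reduction are sound and in fact reproduce the paper's own mechanism, merely reorganized: a single application of Pascal's rule $C^{i-1}_m(t)=C^{i-2}_m(t)+\pi^\perp_{i-1,t}C^{i-2}_{m-1}(t)$ together with a double induction, where the paper instead iterates the peeling inside one inductive step. The genuine gap is the step you yourself flag as ``the genuine obstacle'': the closing case of the induction is never proved, only a plan is sketched, and that plan is aimed at the wrong case. At $l=i-1$ the asserted identity is in general \emph{false}: for $k=0$ the left-hand side $\sum_{s=0}^{i-1}C^{i-1}_s(t)H_{st}$ is precisely the spanning vector $\alpha^{(0)}_{i,j}(t)$ of $\underline{\alpha}_i(t)$ obtained from \eqref{Equation:FirstEquationInTheorem:Theorem11FromFerSimWood09} applied to the deformed array, which has no reason to vanish; more generally the identity fails as soon as the lowest index of the elementary functions drops to $k$ or below, i.e.\ for $l\geq i-k-1$. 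The lemma is true, provable, and actually used in the proof of Lemma \ref{Lemma:LemmaToProveTheAzCase} only in the range where that lowest index is at least $k+1$, i.e.\ $l\leq i-k-2$ (the same implicit constraint $i\geq k+2$ is already needed in your $l=0$ case, where the product of projections must reach $\pi^\perp_{k+1,t}$). So no amount of Vandermonde resummation will rescue the case you isolated.

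Once the range is corrected, the missing closing step is not a binomial resummation at all (that bookkeeping belongs to Lemma \ref{Lemma:LemmaToProveTheAzCase}, which is deduced \emph{from} the present lemma, so importing it here risks circularity): it is simply the observation that at the extreme admissible value of $l$ the bracket multiplying the outer projection is \emph{literally} a defining spanning vector of the deformed bundle. Concretely, in your notation, when $l=i-k-2$ one has $S_2=\sum_{s=0}^{l}C^{i-2}_{k+s}(t)H_{st}^{(k)}=\alpha^{(k)}_{i-1,j}(t)\in\underline{\alpha}_{i-1}(t)$, hence $\pi^\perp_{i-1,t}S_2=0$ by the very definition of $\underline{\alpha}_{i-1}(t)$; this is exactly how the paper closes its induction, peeling the projections down to $C^{k+l+1}_{k+l+1}(t)$ and recognizing $\alpha^{(k)}_{k+l+2}(t)$. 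Without this identification, and with the boundary of your induction placed at $l=i-1$ instead of $l=i-k-2$, the argument does not close.
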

\begin{proof} We prove \eqref{Equation:EquationIn:Lemma:LemmaToProve:LemmaToProveTheAzCase} by induction on $l$. When $l=0$, we get
\[C^{i-1}_{i-1}(t)H_{0t}^{(k)}=\pi_{i-1,t}^\perp\pi_{i-2,t}^\perp...\pi_{k+1,t}^\perp C^k_k(t)H_{0t}^{(k)}=\pi_{i-1,t}^\perp\pi_{i-2,t}^\perp...\pi_{k+1,t}^\perp \alpha_{k+1}^{(k)}(t)=0\]

Next, assume that \eqref{Equation:EquationIn:Lemma:LemmaToProve:LemmaToProveTheAzCase} is true and let us show that

\[\sum_{s=0}^{l+1}C^{i-1}_{i-l-2+s}(t)H_{st}^{(k)}=0.\]
\[\begin{array}{ll}
~&\displaystyle{\sum_{s=0}^{l+1}C^{i-1}_{i-l-2+s}(t)H_{st}^{(k)}=\sum_{s=0}^lC^{i-1}_{i-l-2+s}(t)H_{st}^{(k)}+C^{i-1}_{i-1}H_{l+1,t}^{(k)}}\\
=&\displaystyle{\underbrace{\sum_{s=0}^lC^{i-2}_{i-1-l-1+s}(t)H_{st}^{(k)}}_{\text{\tiny{$=0$, from hypothesis}}}+\pi_{i-1,t}^\perp\Big(\sum_{s=0}^{l}C^{i-2}_{i-l-3+s}(t)H_{st}^{(k)}+C^{i-2}_{i-2}(t)H_{l+1,t}^{(k)}\Big)}\\
=&...=\displaystyle{\pi_{i-1,t}^\perp\pi_{i-2,t}^\perp...\pi_{k+l+2,t}^\perp\Big(\sum_{s=0}^lC^{k+l+1}_{i-l+s-(i-k-l-1)-1}(t)H_{st}^{(k)}+C^{k+l+1}_{k+l+1}(t)H_{l+1,t}^{(k)}\Big)}\\
=&\pi_{i-1,t}^\perp\pi_{i-2,t}^\perp...\pi_{k+l+2,t}^\perp\alpha_{k+l+2}^{(k)}(t)=0,
\end{array}\]
\end{proof}

\textit{Proof of Lemma \ref{Lemma:LemmaToProveTheAzCase}}

We start by noticing that
\begin{equation}\label{Equation:EquationBeforeLemmaToProveTheAzCase}
\begin{array}{l}
\beta_{i+1}^0(t)(V_0,...,V_i)=\beta_{i-1}^0(t)(V_0,...,V_{i-1})+\pi_{it}^\perp\big(t\beta_i^0(t)(V_1,...,V_i)+(t-1)\beta_i^0(t)(V_0,...,V_{i-1})\big)
\end{array}
\end{equation}

We shall prove the lemma by induction on $i$ for the case $d_0<j\leq d_1$. The general case $d_l< j\leq d_{l+1}$ will follow easily. For $i=1$, we have that
\[\beta_{1}^0(t)(H_0)=H_0=t^0\alpha_1^{(0)}(t).\]
Next, assume that \eqref{Equation:EqInLemmaToProveTheAzCase} holds up to order $i$ and let us prove that
\[\beta_{i+1}^0(t)(\underbrace{0,...,0}_{\text{\tiny{$k$}}},H_{0}^{(k)},...,H_{i-k}^{(k)})=t^{k}\alpha_{i+1}^{(k)}(t)\]
As a matter of fact, using \eqref{Equation:EquationBeforeLemmaToProveTheAzCase},

\renewcommand{\arraystretch}{1.3}

$
\begin{array}{ll}
~&\beta_{i+1}^0(t)(0,...,0,H_{0}^{(k)},...,H_{i-k}^{(k)})-t^{k}\alpha_{i+1}^{(k)}(t)\\
=&\beta_i^0(t)(\underbrace{0,...,0}_{\text{\tiny{$k$}}},H_0^{(k)},...,H_{i-k-1}^{(k)})+t\pi_{it}^{\perp}\beta_i(t)(\underbrace{0,...,0}_{\text{\tiny{$k-1$}}},H_0^{(k)},...,H_{i-k}^{(k)})\\
~&\displaystyle{+(t-1)\pi_{it}^\perp\beta_i(t)(\underbrace{0,...,0}_{\text{\tiny{$k$}}},H_0^{(k)},...,H_{i-k-1}^{(k)})-t^k\sum_{s=k}^{i-1}C_s^{i-1}H_{s-k,t}^{(k)}-t^k\pi_{it}^\perp\sum_{s=k-1}^{i-1}H_{s-k+1,t}^{(k)}}
\end{array}
$
\renewcommand{\arraystretch}{1.0}
\medskip

Now, the third parcel vanishes by induction and the first cancels out with the forth, also using the induction hypothesis. Hence, we are left with\medskip

$
\begin{array}{ll}
~&\displaystyle{t\pi_{it}^\perp\Big(\beta_{i}^0(t)(0,...,0,H_0^{(k)},...,H_{i-k}^{(k)})-t^{k-1}\sum_{s=k-1}^{i-1}C_s^{i-1}(t)H_{s-k+1,t}^{(k)}\Big)}\\
=&\displaystyle{t\pi_{it}^\perp\Big(\sum_{s=k-1}^{i-1}C_s^{i-1}(t)\sum_{l=0}^{s-(k-1)}{s\choose s-k+1-l}t^{k-1+l}(t-1)^{s-k+1-l}H_{l}^{(k)}}\\
~&\displaystyle{-t^{k-1}\Big(\sum_{s=k-1}^{i-1}\sum_{l=0}^{s-k+1}{s-k+1\choose s-k+1-l}t^l(t-1)^{s-k+1-l}H_{l}^{(k)}\Big)\Big)}
\end{array}
$\medskip

Now, all the first terms that do not have $(t-1)$ as a factor cancel. Hence, we get
\smallskip

$
\begin{array}{ll}
=&\displaystyle{t^k(t-1)\pi_{it}^\perp\Big(\sum_{s=k}^{i-1}C_s^{i-1}(t)\sum_{l=0}^{s-k}{s\choose s-k+1-l}t^l(t-1)^{s-k-l}H_{l}^{(k)}}\\
~&\displaystyle{-\sum_{s=k}^{i-1}C_s^{i-1}(t)\sum_{l=0}^{s-k}{s-k+1\choose s-k+1-l}t^l(t-1)^{s-k-l}H_l^{(k)}\Big)}
\end{array}
$\medskip

Next, noticing that
\[{i-j\choose l}=\sum_{s=0}^k{k\choose s}{i-(k+j)\choose l-s},\]
(where, as usual, ${i\choose j}=0$ if $j>i$ or $j<0$), the last expressions becomes

\medskip
\renewcommand{\arraystretch}{1.3}
$
\begin{array}{ll}
=&t^k(t-1)\pi_{it}^\perp\big(C_k^{i-1}(t){k\choose 1}H_0^{(k)}+C^{i-1}_{k+1}(t)(({k\choose 0}+{k\choose 1})tH_1^{(k)}+({k\choose 1}+{k\choose 2})(t-1)H_0^{(k)})+...\\
~&+C^{i-1}_{i-1}(t)(({k\choose 0}{i-k-1\choose 1}+{k\choose 1}{i-k-1\choose 0})t^{i-k-1}H_{i-k-1}^{(k)}+...\\
~&+({i-k-1\choose i-k-1}{k\choose 1}+{i-k-1\choose i-k-2}{k\choose 2}+...+{i-k-1\choose i-2k}{k\choose k})(t-1)^{i-k-1}H_0^{(k)})\\
~&-\big(C^{i-1}_k(t)H_0^{(k)}+C^{i-1}_{k+1}(t)((1+1)tH_1^{(k)}+(t-1)H_0^{(k)})+...\\
~&+C^{i-1}_{i-1}(t)(({i-k-1\choose 0}+{i-k-1\choose 1}t^{i-k-1}H_{i-k-1}^{(k)}+...\\
~&\quad+({i-k-1\choose i-k-2}+{i-k-1\choose i-k-1})H_{1}^{(k)}+{i-k-1\choose i-k-1}(t-1)^{i-k-1}H_0^{(k)})\big)
\big)
\end{array}
$\renewcommand{\arraystretch}{1.0}
\medskip

Now, look first to the positive terms. The parcels that start with ${k\choose 1}$ give precisely ${k\choose 1}\alpha_i^{(k)}$. The negative parcels give $\alpha_i^{(k)}$ plus a remainder, that cancels precisely with the positive parcels that start with ${k\choose 0}$. Hence, we get ${k\choose 1}\alpha_i^{(k)}-\alpha_i^{k}$ plus all the positive parcels that start with ${k\choose j}$ with $j\geq 2$, all of which have $(t-1)$ as a factor. We are hence left with
\smallskip

$
\begin{array}{ll}
=&\displaystyle{t^k(t-1)^2\pi_{it}^\perp\Big(\sum_{s=k+1}^{i-1}C_s^{i-1}\sum_{l=0}^{s-k-1}\Big(\sum_{j=2}^{s-k-l+1}{k\choose j}{s-k\choose s-k-l-j+1}\Big)t^l(t-1)^{s-k-l-1}H_l^{(k)}\Big)}
\end{array}
$
\smallskip

Repeating the argument, we end up  with

\smallskip

$
\begin{array}{ll}
=&\displaystyle{t^k(t-1)^2\pi_{it}^\perp\Big(\sum_{j=2}^{i-k}{k+j-2\choose j}(t-1)^{j-2}\sum_{s=k+j-1}^{i-1}C_s^{i-1}(t)H_{s-k-j+1,t}\Big)}
\end{array}
$
\smallskip

which vanishes, by Lemma \ref{Lemma:LemmaToProve:LemmaToProveTheAzCase}, finishing the proof for the case $d_0<j\leq d_1$. For the general case, we just have to observe that the collection of vectors $(H_{0j}(t),...,H_{ij}(t))$ defining $\alpha_{ij}(t)$ is obtained as the in the case $d_0<j\leq d_1$ but pre-multiplying by the appropriate power of $t$.

\qed

We start by noticing that our definitions \eqref{Equation:DefinitionsOfBetas:Equation1}--\eqref{Equation:DefinitionsOfBetas:Equation3} depend on the collection of vectors $(V_{m},...,V_{m'})$. For instance, it is clear that

\begin{align}\beta_{i+1}^{1}(V_{m},...,V_{m'})&=\beta_{i+1}^{0}(V_{m+1},...,V_{m'})\qquad\text{and}\label{Equation:Relations1:Equation1}\\
\hat{\beta}_{i+1}^1(t)(V_{m},...,V_{m'})          &=\hat{\beta}_{i+1}^0(t)(V_{m+1},...,V_{m'})=\beta_{i+1}^0(t)(V_{m+1},...,V_{m'})\quad\text{but in general}\label{Equation:Relations1:Equation2}\\
\beta_{i+1}^1(t)(V_{m},...,V_{m'})&\neq\beta_{i+1}^0(t)(V_{m+1},...,V_{m'}).\label{Equation:Relations1:Equation3}
\end{align}

Since $C_s^i=C_s^{i-1}+\pi_i^\perp C_{s-1}^{i-1}$, one has that
\begin{align}\beta_{i+1}^0&=\beta_{i}^0+\pi_i^\perp\beta_{i}^{1}\quad\text{and}\label{Equation:Relations2:Equation1}\\
\beta_{i+1}^0(t)&=\beta_{i}^0(t)+\pi_{i,t}^\perp\beta_{i}^{1}(t)\quad\text{but in general}\label{Equation:Relations2:Equation2}\\
\hat{\beta}_{i+1}^0(t)&\neq\hat{\beta}_{i}^0(t)+\pi_{i,t}^\perp\hat{\beta}_{i}^{1}(t).\label{Equation:Relations2:Equation3}
\end{align}

\smallskip

We do have the following relation, whose proof follows by manipulation of the expressions defining $\beta_{i+1}^0(t)$, $\hat{\beta}_{i+1}^1(t)$, $\beta_{i+1}^1(t)$ and $\beta_{i+1}^0(t)$:

\begin{lemma}\label{Lemma:LemmaForTheFirstPart} For any collection $(V_m,...,V_{m'})$ we have
\begin{equation}\label{Equation:EquationIn:Lemma:LemmaForTheFirstPart}
t(\beta_{i+1}^0(t)+\hat{\beta}_{i+1}^1(t))-\beta_{i+1}^1(t)=\beta_{i+1}^0(t).
\end{equation}

\end{lemma}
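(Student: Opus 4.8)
\textbf{Proof proposal for Lemma \ref{Lemma:LemmaForTheFirstPart}.}

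The plan is to prove the identity \eqref{Equation:EquationIn:Lemma:LemmaForTheFirstPart} by a direct computation, expanding all four terms according to their definitions \eqref{Equation:DefinitionsOfBetas:Equation1}--\eqref{Equation:DefinitionsOfBetas:Equation3} and the auxiliary family \eqref{Equation:DefinitionOfVijkt}, and then collecting coefficients of each $V_{m+p}$ with its attached elementary function $C_s^i(t)$. Since the claimed relation is linear in the $V$'s, it suffices to fix an index $p\geq 0$ and check equality of the coefficient of $V_{m+p}$ on both sides, for each admissible value of the elementary-function degree $s$.

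First I would write out the left-hand side piece by piece. The term $t\hat{\beta}_{i+1}^1(t)$ uses \eqref{Equation:DefinitionsOfBetas:Equation2} with $k=1$, so it equals $t\sum_{s=0}^i C_s^i(t)V_{m+1}^s(t)$; expanding $V_{m+1}^s(t)=\sum_{l=0}^s\binom{s}{l}(t-1)^{s-l}t^l V_{m+1+l}$ this contributes, to the coefficient of $V_{m+p}$ in front of $C_s^i(t)$, the quantity $\binom{s}{p-1}(t-1)^{s-p+1}t^{p}$ (valid when $1\leq p\leq s+1$). The term $t\beta_{i+1}^0(t)$ uses \eqref{Equation:DefinitionsOfBetas:Equation3} with $k=0$, i.e.\ $\sum_{s=0}^i C_s^i(t)V_m^s(t)$, contributing $\binom{s}{p}(t-1)^{s-p}t^{p+1}$ in front of $C_s^i(t)$. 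The term $-\beta_{i+1}^1(t)$ uses \eqref{Equation:DefinitionsOfBetas:Equation3} with $k=1$, i.e.\ $-\sum_{s=0}^i C_s^i(t)V_m^{s+1}(t)$, contributing $-\binom{s+1}{p}(t-1)^{s+1-p}t^{p}$ in front of $C_s^i(t)$. On the right-hand side, $\beta_{i+1}^0(t)$ contributes $\binom{s}{p}(t-1)^{s-p}t^p$ in front of $C_s^i(t)$.

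The core of the argument is then the purely combinatorial identity that, for all $s,p\geq 0$,
\[
\binom{s}{p-1}(t-1)^{s-p+1}t^{p}+\binom{s}{p}(t-1)^{s-p}t^{p+1}-\binom{s+1}{p}(t-1)^{s+1-p}t^{p}=\binom{s}{p}(t-1)^{s-p}t^{p},
\]
which, after dividing by the common factor $(t-1)^{s-p}t^{p}$ (treating the polynomial identity formally so the division is legitimate), reduces to $\binom{s}{p-1}(t-1)+\binom{s}{p}t-\binom{s+1}{p}(t-1)=\binom{s}{p}$; grouping the $(t-1)$ terms via Pascal's rule $\binom{s}{p-1}+\binom{s}{p}=\binom{s+1}{p}$ makes the $(t-1)$-coefficient vanish, leaving $\binom{s}{p}t-\binom{s}{p}=\binom{s}{p}(t-1)$ on the left, which does \emph{not} match — so the bookkeeping must instead pair the leftover $\binom{s}{p}(t-1)$ against the shift in the $C_s^i(t)$ index. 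Concretely, I expect the honest version of the identity to require also the relation $C_{s}^i(t)=C_{s}^{i-1}(t)+\pi_{i,t}^\perp C_{s-1}^{i-1}(t)$ from \eqref{Equation:Relations2:Equation2}, so that the terms not cancelling at the level of fixed $s$ telescope across adjacent values of $s$; this is exactly the mechanism already used in the proof of Lemma \ref{Lemma:LemmaToProveTheAzCase}. The main obstacle is therefore not any single step but keeping the triple bookkeeping — index $p$ of $V_{m+p}$, degree $s$ of $C_s^i(t)$, and the recursive splitting of $C_s^i(t)$ — consistent, exactly as in \eqref{Equation:EquationBeforeLemmaToProveTheAzCase}; once the correct pairing is identified, each piece collapses by Pascal's rule and the identity \eqref{Equation:EquationIn:Lemma:LemmaForTheFirstPart} follows.
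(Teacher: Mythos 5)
Your setup and coefficient extraction are exactly right, and in fact the termwise identity you wrote down is \emph{true}; the gap is an algebra slip in evaluating it, which then sends you off in the wrong direction. By Pascal's rule $\binom{s}{p-1}-\binom{s+1}{p}=-\binom{s}{p}$, so the $(t-1)$-coefficient does not vanish; after dividing by $(t-1)^{s-p}t^{p}$ the left-hand side is
\begin{equation*}
(t-1)\Bigl[\tbinom{s}{p-1}+\tbinom{s}{p}-\tbinom{s+1}{p}\Bigr]+\tbinom{s}{p}=0+\tbinom{s}{p},
\end{equation*}
which matches the right-hand side. Your ``leftover $\binom{s}{p}t-\binom{s}{p}$'' double counts: once you split $\binom{s}{p}t=\binom{s}{p}(t-1)+\binom{s}{p}$ and absorb the $\binom{s}{p}(t-1)$ piece into the Pascal cancellation, what remains is $\binom{s}{p}$, not $\binom{s}{p}t$. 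Equivalently, the whole lemma reduces to the single relation $V_m^{s+1}(t)=t\,V_{m+1}^{s}(t)+(t-1)\,V_m^{s}(t)$, which follows for every fixed $s$ directly from \eqref{Equation:DefinitionOfVijkt} and Pascal's rule (the convention $V_j=0$ outside $[m,m']$ causes no trouble, since the verification is coefficientwise in the $V_{m+p}$).

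Because that relation holds for each $s$ separately, you may simply multiply it by $C_s^i(t)$ and sum over $s$: no use of the recursion $C_s^i(t)=C_s^{i-1}(t)+\pi_{i,t}^\perp C_{s-1}^{i-1}(t)$ and no telescoping across adjacent values of $s$ is needed --- indeed \eqref{Equation:EquationIn:Lemma:LemmaForTheFirstPart} is valid with the $C_s^i(t)$ replaced by arbitrary operators. Your closing paragraph, which is where the argument purports to close, is therefore both unnecessary and unexecuted: as written, the proposal stops at the (erroneously) ``non-matching'' computation plus a conjecture about how to repair it, so it is not yet a proof. Note that the paper gives no more than the remark that the lemma ``follows by manipulation of the expressions defining'' the $\beta$'s --- i.e.\ precisely the direct expansion you began with; correct the cancellation above and your computation completes it.
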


\begin{corollary}\label{Corollary:AfterLemmaForTheFirstPart} For any $i$,
\begin{equation}\label{Equation:EquationIn:Corollary:AfterLemmaForTheFirstPart}
t\pi_{i+1,t}^\perp\big(\beta_{i+1}^0(t)+\hat{\beta}_{i+1}^1(t)\big)=\pi_{i+1,t}^\perp\beta_{i+2}^0(t).
\end{equation}
\end{corollary}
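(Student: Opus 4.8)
\textbf{Proof plan for Corollary \ref{Corollary:AfterLemmaForTheFirstPart}.}

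The plan is to deduce the corollary directly from Lemma \ref{Lemma:LemmaForTheFirstPart} by applying the projection $\pi_{i+1,t}^\perp$ to identity \eqref{Equation:EquationIn:Lemma:LemmaForTheFirstPart} and then recognizing the right-hand side through the recursion \eqref{Equation:Relations2:Equation2}. Concretely, multiplying \eqref{Equation:EquationIn:Lemma:LemmaForTheFirstPart} on the left by $\pi_{i+1,t}^\perp$ gives
\[
t\,\pi_{i+1,t}^\perp\bigl(\beta_{i+1}^0(t)+\hat{\beta}_{i+1}^1(t)\bigr)-\pi_{i+1,t}^\perp\beta_{i+1}^1(t)=\pi_{i+1,t}^\perp\beta_{i+1}^0(t).
\]
So it suffices to show that $\pi_{i+1,t}^\perp\beta_{i+1}^0(t)+\pi_{i+1,t}^\perp\beta_{i+1}^1(t)=\pi_{i+1,t}^\perp\beta_{i+2}^0(t)$. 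By \eqref{Equation:Relations2:Equation2} (applied with index $i+1$ in place of $i$) we have $\beta_{i+2}^0(t)=\beta_{i+1}^0(t)+\pi_{i+1,t}^\perp\beta_{i+1}^1(t)$; applying $\pi_{i+1,t}^\perp$ and using $\pi_{i+1,t}^\perp\pi_{i+1,t}^\perp=\pi_{i+1,t}^\perp$ yields exactly the required identity, and rearranging gives \eqref{Equation:EquationIn:Corollary:AfterLemmaForTheFirstPart}.

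The one point that needs a little care is the bookkeeping of which collection $(V_m,\dots,V_{m'})$ each $\beta$ refers to: all the $\beta$'s appearing here are evaluated on the same fixed collection, so the shift phenomenon warned about in \eqref{Equation:Relations1:Equation3} and \eqref{Equation:Relations2:Equation3} does not interfere — we only use \eqref{Equation:Relations2:Equation2}, which is asserted to hold in general, and never the false relations \eqref{Equation:Relations1:Equation3} or \eqref{Equation:Relations2:Equation3}. I expect no genuine obstacle here; the corollary is a one-line algebraic consequence of the lemma, and the only thing to be vigilant about is not accidentally invoking the $\hat\beta$ recursion \eqref{Equation:Relations2:Equation3}, which fails, in place of the $\beta$ recursion \eqref{Equation:Relations2:Equation2}, which holds.
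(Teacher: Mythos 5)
Your proof is correct and is essentially the paper's own argument: both apply $\pi_{i+1,t}^\perp$ to the identity of Lemma \ref{Lemma:LemmaForTheFirstPart}, use idempotency of the projection, and invoke \eqref{Equation:Relations2:Equation2} (with index shifted to $i+1$) to recognize $\pi_{i+1,t}^\perp\beta_{i+2}^0(t)$. Your remark about keeping the same fixed collection $(V_m,\dots,V_{m'})$ and avoiding the false relation \eqref{Equation:Relations2:Equation3} is exactly the right point of caution, but introduces nothing beyond what the paper does.
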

\begin{proof}We have that
\[\begin{array}{ll}t\pi_{i+t,t}^\perp\big(\beta_{i+1}^0(t)+\hat{\beta}_{i+1}^1(t)\big)&=\pi_{i+1,t}^\perp\big(\beta_{i+1}^0(t)+\beta_{i+1}^1(t)\big)\\
~&=\pi_{i+1,t}^\perp\big(\beta_{i+1}^0(t)+\pi_{i+1,t}^\perp\beta_{i+1}^1(t)\big)=\pi_{i+1,t}^\perp\beta_{i+2}^0(t)\text{\tiny{(using \eqref{Equation:Relations2:Equation2})}}.\\
\end{array}\]
\end{proof}

Before proceeding into the proof of Lemma \ref{Lemma:LemmaOfEquation:FirstHypothesisEquationVersionTwo}, we need a final extra notation. We start by noticing that
\[\eta_r^k=\big((\pi_{r,t}+\lambda^{-1}\pi_{r,t}^\perp)\eta_{r-1}(\pi_r+\lambda t\pi_r^\perp)\big)^k\]
Since the first factor has only $\lambda$ powers $0$ or $-1$ and the last one only powers $0$ or $1$, we see that\renewcommand{\arraystretch}{1.3}
\[\begin{array}{ll}
\eta_r^k&=\pi_{r,t}\eta_{r-1}^k\pi_r+t\pi_{r,t}\eta_{r-1}^{k-1}\pi_{r}^\perp+\pi_{r,t}^\perp\eta_{r-1}^{k+1}\pi_{r}+t\pi_{r,t}^\perp\eta_{r-1}^k\pi_r^\perp\\
~&=t(\pi_{r,t}\eta_{r-1}^{k-1}+\pi_{r,t}^\perp\eta_{r-1}^k)\pi_r^\perp+\pi_{r,t}\eta_{r-1}^k\pi_r+\pi_{r,t}^\perp\eta_{r-1}^{k+1}\pi_{r}
\end{array}\]\renewcommand{\arraystretch}{1.0}

We denote by $\eta_r^{k,1}$ the coefficient of $\pi_r^\perp$ in the above expression:
\[\eta_r^{k,1}=t(\pi_{r,t}\eta_{r-1}^{k-1}+\pi_{r,t}^\perp\eta_{r-1}^k)\]
For completeness, we denote $\eta_r^k$ by $\eta_r^{k,0}$ and define recursively for $l\leq k$
\begin{equation}\label{Equation:DefinitionofEtaRKL}
\eta_{r}^{k,l}=t(\pi_{r,t}\eta_{r-1}^{k-1,l-1}+\pi_{r,t}^\perp\eta_{r-1}^{k,l-1}).
\end{equation}
We then have
\begin{lemma}\label{Lemma:GoingUpAndDownOnEtas}For all $j$ and for all $l\leq k\leq j$
\begin{align}
\eta_j^{k,l}\pi_{j-l}&=t^{-1}\eta_{j}^{k+1,l+1}\pi_{j-l}\qquad\text{and}\label{Equation:FirstEquationInLemma:GoingUpAndDownOnEtas}\\
\eta_j^{k,l}\pi_{j-l}^\perp&=\eta_{j}^{k,l+1}\pi_{j-l}^\perp.\label{Equation:SecondEquationInLemma:GoingUpAndDownOnEtas}
\end{align}

\end{lemma}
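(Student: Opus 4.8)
\textbf{Proof strategy for Lemma \ref{Lemma:GoingUpAndDownOnEtas}.}

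The plan is to prove both identities simultaneously by downward induction on $l$, starting from the top value $l = k$ and descending to $l = 0$. The base case $l = k$ is a matter of unwinding definitions: by \eqref{Equation:DefinitionofEtaRKL} one has $\eta_j^{k,k} = t(\pi_{j,t}\eta_{j-1}^{k-1,k-1} + \pi_{j,t}^\perp\eta_{j-1}^{k,k-1})$, and after $k$ such steps $\eta_j^{k,k}$ is expressed through the $\eta_{j-k}^{\bullet,0} = \eta_{j-k}^\bullet$, so the statements for $l=k$ reduce to the elementary multiplication rules for $\eta_{j-k}$ against $\pi_{j-k}$ and $\pi_{j-k}^\perp$ that follow directly from the factorisation $\eta_{j-k} = (\pi_{j-k,t}+\lambda^{-1}\pi_{j-k,t}^\perp)\eta_{j-k-1}(\pi_{j-k}+\lambda t\pi_{j-k}^\perp)$ used already in the proof of Theorem \ref{Theorem:MainTheorem} and in Example \ref{Example:UnitonNumberTwoExample}; one reads off $\eta_{j-k}^{m}\pi_{j-k} = \pi_{j-k,t}\eta_{j-k-1}^m\pi_{j-k} + \pi_{j-k,t}^\perp\eta_{j-k-1}^{m+1}\pi_{j-k}$ and the companion formula for $\pi_{j-k}^\perp$, which are exactly \eqref{Equation:FirstEquationInLemma:GoingUpAndDownOnEtas}--\eqref{Equation:SecondEquationInLemma:GoingUpAndDownOnEtas} in that boundary case.

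For the inductive step, suppose both identities hold at level $l+1$ (for every admissible $j,k$), and consider $\eta_j^{k,l}$. Expand once using \eqref{Equation:DefinitionofEtaRKL}:
\[
\eta_j^{k,l}\,\pi_{j-l} = t\big(\pi_{j,t}\eta_{j-1}^{k-1,l-1} + \pi_{j,t}^\perp\eta_{j-1}^{k,l-1}\big)\pi_{j-l}.
\]
The subscript arithmetic is the crux: in $\eta_{j-1}^{\bullet,l-1}$ the relevant "target" projection is $\pi_{(j-1)-(l-1)} = \pi_{j-l}$, so the inner factors are precisely of the form to which the level-$l$ hypothesis applies after relabelling $(j,k,l) \mapsto (j-1,\bullet,l-1)$ — except that I must instead run the induction so that the hypothesis is available one level \emph{down} in $l$ but one level \emph{down} in $j$ as well, matching $\eta_{j-1}^{\bullet,l-1}$ against $\pi_{(j-1)-(l-1)}$. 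Applying the hypothesis converts each $\eta_{j-1}^{k-1,l-1}\pi_{j-l}$ into $t^{-1}\eta_{j-1}^{k,l}\pi_{j-l}$ and likewise $\eta_{j-1}^{k,l-1}\pi_{j-l}$ into $t^{-1}\eta_{j-1}^{k+1,l}\pi_{j-l}$; substituting back and recognising the bracket $t(\pi_{j,t}\eta_{j-1}^{k,l} + \pi_{j,t}^\perp\eta_{j-1}^{k+1,l}) = \eta_j^{k+1,l+1}$ by \eqref{Equation:DefinitionofEtaRKL} yields $\eta_j^{k,l}\pi_{j-l} = t^{-1}\eta_j^{k+1,l+1}\pi_{j-l}$. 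The second identity \eqref{Equation:SecondEquationInLemma:GoingUpAndDownOnEtas} is obtained the same way, using the $\pi^\perp$-branch of the hypothesis, where the factor of $t$ is absent; here the bracket reassembles as $\eta_j^{k,l+1}$.

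The main obstacle I anticipate is bookkeeping of the two shifting indices: the recursion \eqref{Equation:DefinitionofEtaRKL} lowers $l$ by one while keeping $j$ fixed, whereas the projection $\pi_{j-l}$ that we multiply against "moves" when we pass from $\eta_j$ to $\eta_{j-1}$, so one has to check carefully that $\pi_{j-l}$ is simultaneously the correct target projection at level $j$ (distance $l$ from the top) and at level $j-1$ (distance $l-1$ from the top). Getting this alignment right — and in particular verifying that the induction is well-founded, i.e. that each application of the hypothesis genuinely decreases the lexicographic pair $(l, \text{something})$ or reaches the base case $l=k$ — is the only delicate point; once the indices are tracked correctly the algebra is the routine orthogonal-projection manipulation already illustrated in Examples \ref{Example:UnitonNumberOneExample} and \ref{Example:UnitonNumberTwoExample}.
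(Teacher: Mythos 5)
Your computational core is essentially the paper's inductive step, but the induction you wrap around it is set up backwards and, as stated, does not close. You declare a downward induction on $l$, from $l=k$ to $l=0$, so the hypothesis available when treating level $l$ is the statement at level $l+1$. However, the only recursion at hand, \eqref{Equation:DefinitionofEtaRKL}, expresses $\eta_j^{k,l}$ through $\eta_{j-1}^{k-1,l-1}$ and $\eta_{j-1}^{k,l-1}$, i.e.\ it lowers $l$ (together with $j$); consequently the identities you actually invoke in your step --- converting $\eta_{j-1}^{k-1,l-1}\pi_{j-l}$ into $t^{-1}\eta_{j-1}^{k,l}\pi_{j-l}$, using $(j-1)-(l-1)=j-l$ --- are instances of the lemma at level $l-1$ (at stage $j-1$), not at level $l+1$. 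The level-$(l+1)$ hypothesis you posited is never used and cannot enter, so the downward induction has no valid step; you flag this tension yourself mid-proof (``I must instead run the induction so that the hypothesis is available one level down'') but do not restructure the argument accordingly. The fix is exactly the paper's scheme: induct upward on $l$, simultaneously over all $j$ and all $k\leq j$, with base case $l=0$, where $\eta_j^{k,0}=\eta_j^k$ and the factorisation $\eta_j=(\pi_{j,t}+\lambda^{-1}\pi_{j,t}^\perp)\eta_{j-1}(\pi_j+\lambda t\pi_j^\perp)$ gives $\eta_j^{k}\pi_j=(\pi_{j,t}\eta_{j-1}^{k}+\pi_{j,t}^\perp\eta_{j-1}^{k+1})\pi_j=t^{-1}\eta_j^{k+1,1}\pi_j$ and the analogous identity against $\pi_j^\perp$; the passage from level $l$ at stage $j-1$ to level $l+1$ at stage $j$ is then precisely the computation you display.

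Relatedly, your proposed base case $l=k$ is not a genuine base case. Unwinding $\eta_j^{k,k}$ via \eqref{Equation:DefinitionofEtaRKL} produces terms such as $\eta_{j-1}^{k,k-1}$, whose second index is strictly smaller than the first, so after full unwinding you are left not with ``elementary multiplication rules'' for $\eta_{j-k}$ but with instances of the $l=0$ case of the lemma itself: already for $k=l=1$ one has $\eta_j^{1,1}=t(\pi_{j,t}\eta_{j-1}^{0}+\pi_{j,t}^\perp\eta_{j-1}^{1})$ and $\eta_j^{2,2}=t(\pi_{j,t}\eta_{j-1}^{1,1}+\pi_{j,t}^\perp\eta_{j-1}^{2,1})$, so verifying $\eta_j^{1,1}\pi_{j-1}=t^{-1}\eta_j^{2,2}\pi_{j-1}$ requires exactly the $l=0$ identities $\eta_{j-1}^{k',0}\pi_{j-1}=t^{-1}\eta_{j-1}^{k'+1,1}\pi_{j-1}$ for $k'=0,1$. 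So the well-founded starting point is $l=0$, and the induction must run upward in $l$.
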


\textit{Proof of \eqref{Equation:FirstEquationInLemma:GoingUpAndDownOnEtas}.}

We begin by showing that \eqref{Equation:FirstEquationInLemma:GoingUpAndDownOnEtas} is true for all $j$ and for all $k\leq j$ when $l=0$:
\[\eta_j^{k,0}\pi_{j}=t^{-1}\eta_{j}^{k+1,1}\pi_{j}\equi (\pi_j\eta_{j-1}^k+\pi_{j-1}^\perp\eta_{j-1}^{k+1})\pi_{j}=t^{-1}t(\pi_j\eta_{j-1}^{k}+\pi_{j}^\perp\eta_{j-1}^{k+1})\pi_j,\]
as wanted. Next, assume \eqref{Equation:FirstEquationInLemma:GoingUpAndDownOnEtas} is valid for $l$ fixed, for any $k\leq j$ and let us prove that it is also valid for any $k\leq j$ for $l+1$. As a matter of fact, we have\renewcommand{\arraystretch}{1.3}
\[\begin{array}{ll}\eta_j^{k,l+1}\pi_{j-(l+1)}&=t(\pi_j\eta_{j-1}^{k-1,l}+\pi_j^\perp\eta_{j-1}^{k,l})\pi_{j-1-l}=t\pi_j t^{-1}\eta_{j-1}^{k,l+1}\pi_{j-1-l}+t\pi_{j}^\perp t^{-1}\eta_{j-1}^{k+1,l+1}\pi_{j-1-l}\\
~&=(\pi_j \eta_{j-1}^{k,l+1}+\pi_{j}^\perp \eta_{j-1}^{k+1,l+1})\pi_{j-1-l}=t^{-1}\eta_{j}^{k+1,l+1}\pi_{j-(l+1)},
\end{array}\]\renewcommand{\arraystretch}{1.0}

as desired.

\qed

\textit{Proof of \eqref{Equation:SecondEquationInLemma:GoingUpAndDownOnEtas}.} As before, let us prove the case when $l=0$:
\[\eta_j^{k,0}\pi_{j}^\perp=\eta_{j}^{k,1}\pi_{j}^\perp\equi t(\pi_j\eta_{j-1}^{k-1}+\pi_j^\perp\eta_{j-1}^{k})\pi_j^\perp=t(\pi_j\eta_{j-1}^{k-1,0}+\pi_j^\perp\eta_{j-1}^{k,0})\pi_j^\perp,\]
which is true. As for the induction:\renewcommand{\arraystretch}{1.3}
\[\begin{array}{ll}\eta_j^{k,l+1}\pi_{j-(l+1)}^\perp&=t(\pi_j\eta_{j-1}^{k-1,l}+\pi_j^\perp\eta_{j-1}^{k,l})\pi_{j-1-l}=t(\pi_j\eta_{j-1}^{k-1,l+1}+\pi_j^\perp\eta_{j-1}^{k,l+1})\pi_{j-1-l}\\
~&=\eta_{j}^{k,l+2}\pi_{j-(l+1)},
\end{array}\]\renewcommand{\arraystretch}{1.0}

as wanted.

\qed

Now, we introduce the following operators
\begin{equation}\label{Equation:DefinitionOfArk}\renewcommand{\arraystretch}{1.3}
\begin{array}{ll}
A_2^r&=A_2^{r-1}=...=A_2^2=\eta_1^0,\\
A_r^r&=\eta_{r-1}^0+\pi_{r,t}^\perp\eta_{r-1}^1 (r>2)\\
A_3^r&=A_3^{r-1}+t^{3-r}\pi_{r,t}^\perp\eta_{r-1}^{r-2,r-3-1},\\
...&~\\
A_j^r&=A_j^{r-1}+t^{j-r}\pi_{r,t}^\perp\eta_{r-1}^{r-j+1,r-j-1}\quad (j<r)\\
\end{array}
\end{equation}\renewcommand{\arraystretch}{1.0}

\begin{lemma}\label{Lemma:CompleteLemmaToProveLemma:LemmaOfEquation:FirstHypothesisEquationVersionTwo}
For any $r'$ and for any fixed collection $(V_m,..., V_{m+r'})$, the following equation holds
\begin{equation}\label{Equation:ThirdHypothesisEquationVersionTwo}
\eta_{r'-1}^1\beta_{r'}^0+\sum_{s=2}^{r'-1}\pi_{s,t}^\perp\beta_{s+1}^0(t)-t\sum_{s=2}^{r'-1}A_s^{r'-1}\pi_s^\perp\beta_{s+1}^0=-\sum_{s=3}^{r'-1}t^{s-r}\eta_{r'-1}^{r'-s+1,r'-s-1}\pi_{s}^\perp\beta_{s}^0.
\end{equation}
\end{lemma}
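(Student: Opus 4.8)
The plan is to prove the identity by induction on $r'$, unwinding the outermost projection level at each step; the statement is purely algebraic, and it is natural to run this induction jointly with Lemma \ref{Lemma:LemmaOfEquation:FirstHypothesisEquationVersionTwo} and with the fact that $\eta_{r'-1}\in\Lambda_+$, so that both are available at all strictly lower levels. The base case $r'=2$ is immediate: all three sums are empty and the assertion reads $\eta_1^1\beta_2^0=0$, true because $\eta_1=\eta_1^0$ (Example \ref{Example:UnitonNumberOneExample}). For $r'=3$ I would substitute $A_2^2=\eta_1^0$ and $\eta_2^1=t\pi_{2,t}\eta_1^0\pi_2^\perp$ (from the defining recursion for $\eta_2$, using $\eta_1^1=\eta_1^2=0$); the left-hand side then collapses to $\pi_{2,t}^\perp\big(\beta_3^0(t)-t\eta_1^0\pi_2^\perp\beta_3^0\big)$, which vanishes by Corollary \ref{Corollary:AfterLemmaForTheFirstPart} together with the relations $\eta_1^0\beta_2^0=\beta_2^0(t)$ and $\eta_1^0\beta_2^1=\hat\beta_2^1(t)$ (instances of \eqref{Equation:FirstEquationForTheUnitonOneCase} and \eqref{Equation:Relations1:Equation2}) and the fact $\pi_{2,t}^\perp\eta_1^0\pi_2=0$ (Example \ref{Example:UnitonNumberTwoExample}).

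For the inductive step, assuming the statement at level $r'-1$, I would rewrite every ingredient carrying the subscript $r'-1$ one level down. Using the $\eta$-recursion together with the shift relations \eqref{Equation:DefinitionofEtaRKL} and Lemma \ref{Lemma:GoingUpAndDownOnEtas}, split $\eta_{r'-1}^1$ into its four pieces $\pi_{r'-1,t}(\cdot)\pi_{r'-1}$, $\pi_{r'-1,t}(\cdot)\pi_{r'-1}^\perp$, $\pi_{r'-1,t}^\perp(\cdot)\pi_{r'-1}$, $\pi_{r'-1,t}^\perp(\cdot)\pi_{r'-1}^\perp$ with $\eta_{r'-2}$-coefficients; split $\beta_{r'}^0=\beta_{r'-1}^0+\pi_{r'-1}^\perp\beta_{r'-1}^1$ by \eqref{Equation:Relations2:Equation1}; split each $\beta_{s+1}^0(t)$ by \eqref{Equation:Relations2:Equation2}; and split each $A_s^{r'-1}$ by its recursion \eqref{Equation:DefinitionOfArk}. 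After regrouping, the part of the resulting expression free of an outermost $\pi_{r'-1}$ or $\pi_{r'-1}^\perp$ should reassemble into the left-hand side of the statement at level $r'-1$ (applied to $(V_m,\dots,V_{m+r'-1})$), plus a term involving $\eta_{r'-2}^0$ that I would rewrite by Lemma \ref{Lemma:LemmaOfEquation:FirstHypothesisEquationVersionTwo} at level $r'-2$; the induction hypothesis then disposes of all of this.

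It then remains to match the ``boundary'' terms, i.e.\ those carrying an outermost projection at level $r'-1$. The tools I would use here are: Lemma \ref{Lemma:GoingUpAndDownOnEtas}, to convert $\eta_{r'-1}^{k,l}\pi_{r'-1-l}$ and $\eta_{r'-1}^{k,l}\pi_{r'-1-l}^\perp$ into the combinations $\eta_{r'-1}^{r'-s+1,r'-s-1}$ occurring on the right, while tracking the compensating powers of $t$; Corollary \ref{Corollary:AfterLemmaForTheFirstPart}, to coalesce a pair $\beta^0(t)+\hat\beta^1(t)$ at level $s$ into $\beta^0(t)$ at level $s+1$, thereby regenerating the terms $\pi_{s,t}^\perp\beta_{s+1}^0(t)$; and Lemma \ref{Lemma:LemmaForTheFirstPart}, equation \eqref{Equation:EquationIn:Lemma:LemmaForTheFirstPart}, to pass between $\beta^0(t)$, $\hat\beta^1(t)$ and $\beta^1(t)$. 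One then checks that after all cancellations exactly $-t^{s-r'}\eta_{r'-1}^{r'-s+1,r'-s-1}\pi_s^\perp\beta_s^0$ survives for each $s=3,\dots,r'-1$.

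The hard part will be precisely this last bookkeeping. With four flavours of ``beta'' ($\beta^0,\beta^1,\beta^0(t),\hat\beta^1(t)$), coefficients $\eta_j^{k,l}$ bearing three interlocking indices whose shifts are governed by Lemma \ref{Lemma:GoingUpAndDownOnEtas}, and operators $A_j^{r'}$ that are themselves recursively layered with stray powers of $t$, the delicate point is to keep the telescoping of the $A$-recursion \eqref{Equation:DefinitionOfArk} synchronized with the $(k,l)$-shifts of the $\eta^{k,l}$'s, so that every unwanted contribution cancels pairwise and precisely the claimed sum remains.
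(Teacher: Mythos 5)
Your overall strategy coincides with the paper's: equation \eqref{Equation:ThirdHypothesisEquationVersionTwo} is indeed proved there by induction on the level, interlocked with Lemma \ref{Lemma:LemmaOfEquation:FirstHypothesisEquationVersionTwo}, using the recursion for $\eta_r^1$, the recursion \eqref{Equation:DefinitionOfArk} for the $A_s^r$, and Lemma \ref{Lemma:GoingUpAndDownOnEtas} to shift the indices of the $\eta^{k,l}$'s; your base cases $r'=2,3$ are also correct. But as a proof the proposal has a genuine gap: the entire content of the lemma is precisely the ``last bookkeeping'' that you defer with ``one then checks that after all cancellations exactly $-t^{s-r'}\eta_{r'-1}^{r'-s+1,r'-s-1}\pi_s^\perp\beta_s^0$ survives,'' and nothing in your outline demonstrates that this check closes. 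The paper's proof consists almost entirely of carrying out that regrouping explicitly, and it does not go through with only the tools you list.

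Concretely, the missing ingredient is the treatment of the top boundary term $\pi_{r'-1,t}^\perp\beta_{r'}^0(t)$, which appears in the sum $\sum_{s=2}^{r'-1}\pi_{s,t}^\perp\beta_{s+1}^0(t)$ at level $r'$ but has no counterpart at level $r'-1$. The paper handles it through the auxiliary identity \eqref{Equation:AuxiliaryCalculusEq3}, which rewrites $\pi_{r'-1,t}^\perp\beta_{r'}^0(t)$ entirely in terms of $\eta_{r'-2}$-data; that identity is obtained by applying \eqref{Equation:FirstHypothesisEquationVersionTwo} at level $r'-1$ (i.e.\ for $\eta_{r'-1}^0$, the same $\eta$-subscript as in the statement being proved, which is legitimate in the interlocked induction because Lemma \ref{Lemma:LemmaOfEquation:FirstHypothesisEquationVersionTwo} at level $r'-1$ is established before \eqref{Equation:ThirdHypothesisEquationVersionTwo} at level $r'$), combined with the $A$-recursion and the $\eta$-recursion. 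Your plan invokes Lemma \ref{Lemma:LemmaOfEquation:FirstHypothesisEquationVersionTwo} only at level $r'-2$, and proposes to absorb the boundary terms via \eqref{Equation:Relations2:Equation2}, Lemma \ref{Lemma:LemmaForTheFirstPart} and Corollary \ref{Corollary:AfterLemmaForTheFirstPart}; those relations alone do not convert $\pi_{r'-1,t}^\perp\beta_{r'}^0(t)$ into the $\eta_{r'-2}^{k,l}$-terms needed for the pairwise cancellations (in the paper, Lemma \ref{Lemma:LemmaForTheFirstPart} and Corollary \ref{Corollary:AfterLemmaForTheFirstPart} are in fact used in the proof of Lemma \ref{Lemma:LemmaOfEquation:FirstHypothesisEquationVersionTwo}, not of this one). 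Without an analogue of \eqref{Equation:AuxiliaryCalculusEq3}, the regrouping you describe cannot be expected to leave exactly the claimed right-hand side, so the inductive step remains unproved.
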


\begin{proof}
The equation \eqref{Equation:ThirdHypothesisEquationVersionTwo} shall be proved by induction on $r$. Besides the induction hypothesis, we shall now use the following equations

\begin{align}
\eta_r^1&=\pi_{r,t}\eta_{r-1}^1\pi_r+t\pi_{r,t}\eta_{r-1}^0\pi_r^\perp+\pi_{r,t}^\perp\eta_{r-1}^2\pi_r+t\pi_{r,t}^\perp\eta_{r-1}^1\pi_r^\perp,\label{Equation:AuxiliaryCalculusEq1}\\
A_r^r&=\eta_{r-1}^0+\pi_{r,t}^\perp\eta_{r-1}^1\quad\text{and }\label{Equation:AuxiliaryCalculusEq2}\\
\pi_{r,t}^\perp\beta_{r+1}^0(t)&=\pi_{r,t}^\perp\Big(t\eta_{r-1}^0\pi_r^\perp\beta_{r+1}^0+\eta_{r-1}^1\beta_r^0-\eta_{r-1}^0\pi_r(\beta_{r}^0)+\sum_{s=3}^{r-1}t^{s-r}\eta_{r-1}^{r-s+1,r-s-1}\pi_{s}^\perp\beta_s^0\Big)\label{Equation:AuxiliaryCalculusEq3}
\end{align}

(we shall prove \eqref{Equation:AuxiliaryCalculusEq3} in the end). We have that

\bigskip

$
\begin{array}{ll}
~\quad&\eta_r^1\beta_{r+1}^0+\sum_{s=2}^r\pi_{s,t}^\perp(\beta_{s+1}^0(t))-t\sum_{s=2}^{r}A_s^{r}\pi_s^\perp(\beta_{s+1}^0)\\
\end{array}
$
\smallskip

$
\begin{array}{ll}
=&\displaystyle{(\pi_{r,t}\eta_{r-1}^1\pi_r+t\pi_{r,t}\eta_{r-1}^0\pi_r^\perp+\pi_{r,t}^\perp\eta_{r-1}^2\pi_r+t\pi_{r,t}^\perp\eta_{r-1}^1\pi_r^\perp)\beta_{r+1}^0+\sum_{s=2}^{r-1}\pi_{s,t}^\perp(\beta_{s+1}^0(t))}\\
~&\displaystyle{+\pi_{r,t}^\perp\big(t\eta_{r-1}^0\pi_r^\perp\beta_{r+1}^0+\eta_{r-1}^1\beta_r^0-\eta_{r-1}^0\pi_r(\beta_{r}^0)+\sum_{s=3}^{r-1}t^{s-r}\eta_{r-1}^{r-s+1,r-s-1}\pi_{s}^\perp\beta_s^0\big)}\\
~&\displaystyle{-t\sum_{s=2}^{r-1}A_{s}^{r}\pi_s^\perp(\beta_{s+1}^0)-t(\eta_{r-1}^0+\pi_{r,t}^\perp\eta_{r-1}^1)\pi_r^\perp(\beta_{r+1}^0)}\\
\end{array}
$

$
\begin{array}{ll}
=&t\pi_{r,t}\eta_{r-1}^0\pi_r^\perp\beta_{r+1}^0+t\pi_{r,t}^\perp\eta_{r-1}^1\pi_r^\perp\beta_{r+1}^0+\pi_{r,t}\eta_{r-1}^1\pi_r\beta_{r}^0+\pi_{r,t}^\perp\eta_{r-1}^2\pi_r\beta_{r}^0\\
~&\displaystyle{+\sum_{s=2}^{r-1}\pi_{s,t}^\perp(\beta_{s+1}^0(t))+\pi_{r,t}^\perp\big(t\eta_{r-1}^0\pi_r^\perp\beta_{r+1}^0+\eta_{r-1}^1\beta_r^0-\eta_{r-1}^0\pi_r(\beta_{r}^0)+\sum_{s=3}^{r-1}t^{s-r}\eta_{r-1}^{r-s+1,r-s-1}\pi_{s}^\perp\beta_s^0\big)}\\
~&\displaystyle{-tA_2^{r-1}\pi_2^\perp(\beta_3^0)-t\sum_{s=3}^{r-1}(A_s^{r-1}+t^{s-r}\pi_{r,t}^\perp\eta_{r-s+1}^{r-2,r-s-1})\pi_s^\perp(\beta_{s+1}^0)-t(\eta_{r-1}^0+\pi_{r,t}^\perp\eta_{r-1}^1)\pi_r^\perp(\beta_{r+1}^0)}
\end{array}
$
\smallskip

Now, parcels with $\beta_{r+1}^0$ vanish. Using the induction hypothesis \eqref{Equation:ThirdHypothesisEquationVersionTwo},

$
\begin{array}{ll}
=&\displaystyle{\pi_{r,t}\eta_{r-1}^1\pi_r\beta_{r}^0+\pi_{r,t}^\perp\eta_{r-1}^2\pi_r\beta_{r}^0+\pi_{r,t}^\perp\big(\eta_{r-1}^1\beta_r^0-\eta_{r-1}^0\pi_r(\beta_{r}^0)+\sum_{s=3}^{r-1}t^{s-r}\eta_{r-1}^{r-s+1,r-s-1}\pi_{s}^\perp\beta_{s}^0\big)}\\
~&\displaystyle{-\sum_{s=3}^{r-1}t^{s+1-r}\pi_{r,t}^\perp\eta_{r-1}^{r-s+1,r-s-1}\pi_s^\perp(\beta_{s+1}^0)-\eta_{r-1}^1\beta_{r}^0-\sum_{s=3}^{r-1}t^{s-r}\eta_{r-1}^{r-s+1,r-s-1}\pi_{s}^\perp\beta_{s}^0}
\end{array}
$

Writing $\pi_s^\perp=\text{Id}-\pi_s$ and re-organizing the terms, we end up with

$
\begin{array}{ll}
=&\big(\pi_{r,t}\eta_{r-1}^1\pi_r+\pi_{r,t}^\perp\eta_{r-1}^2\pi_r+\pi_{r,t}^\perp\eta_{r-1}^1-\pi_{r,t}^\perp\eta_{r-1}^0\pi_r-t^{0}\pi_{r,t}^\perp\eta_{r-1}^{2,0}-\eta_{r-1}^1\big)\beta_{r}^0\\
~&+\big(t^{-1}\pi_{r,t}^\perp\eta_{r-1}^{2,0}\pi_{r-1}^\perp-t^{-1}\pi_{r,t}^\perp\eta_{r-1}^{3,1}+t^0\pi_{r,t}^\perp\eta_{r-1}^{2,0}\pi_{r-1}-t^{-1}\eta_{r-1}^{2,0}\pi_{r-1}^\perp\big)\beta_{r-1}^0\\
~&+\big(t^{-2}\pi_{r,t}^\perp\eta_{r-1}^{3,1}\pi_{r-2}^\perp-t^{-2}\pi_{r,t}^\perp\eta_{r-1}^{4,2}+t^{-1}\pi_{r,t}^\perp\eta_{r-1}^{3,1}\pi_{r-2}-t^{-2}\eta_{r-1}^{3,1}\pi_{r-2}^\perp\big)\beta_{r-2}^0\\
~&...\\
~&+\big(t^{4-r}\pi_{r,t}^\perp\eta_{r-1}^{r-3,r-5}\pi_{4}^\perp-t^{4-r}\pi_{r,t}^\perp\eta_{r-1}^{r-2,r-4}+t^{5-r}\pi_{r,t}^\perp\eta_{r-1}^{r-3,r-5}\pi_{4}-t^{4-r}\eta_{r-1}^{r-3,r-5}\pi_{4}^\perp\big)\beta_{4}^0\\
~&+\big(t^{3-r}\pi_{r,t}^\perp\eta_{r-1}^{r-2,r-4}\pi_{3}^\perp-t^{3-r}\pi_{r,t}^\perp\underbrace{\eta_{r-1}^{r-1,r-3}}_{\text{\tiny{$=0$}}}+t^{4-r}\pi_{r,t}^\perp\eta_{r-1}^{r-2,r-4}\pi_{3}-t^{3-r}\eta_{r-1}^{r-2,r-4}\pi_{3}^\perp\big)\beta_{3}^0\\
\end{array}
$

From Lemma \ref{Lemma:GoingUpAndDownOnEtas} (equation \eqref{Equation:FirstEquationInLemma:GoingUpAndDownOnEtas}), we obtain

$
\begin{array}{ll}
=&\displaystyle{-\big(\pi_{r,t}\eta_{r-1}^1+\pi_{r,t}^\perp\eta_{r-1}^{2,0}-0\big)\pi_r^\perp\beta_{r}^0-\sum_{s=3}^{r-1}t^{s-r}\big(\pi_{r,t}\eta_{r-1}^{r-s+1,r-s-1}+\pi_{r,t}^\perp\eta_{r-1}^{r-s+2,r-s}\big)\pi_{s}^\perp\beta_{s}^0}\\
\end{array}
$

Again from Lemma \ref{Lemma:GoingUpAndDownOnEtas} (equation \eqref{Equation:SecondEquationInLemma:GoingUpAndDownOnEtas}) and \eqref{Equation:DefinitionofEtaRKL}

$
\begin{array}{ll}
=&\displaystyle{-\sum_{s=3}^{r}t^{s-r-1}\eta_r^{r-s+2,r-s+1}\pi_s^\perp\beta_s^0\text{\tiny{(use \eqref{Equation:SecondEquationInLemma:GoingUpAndDownOnEtas})}}=-\sum_{s=3}^{r}t^{s-r-1}\eta_r^{r-s+2,r-s}\pi_s^\perp\beta_{s}^0}\\
\end{array}
$

Thus,
\[\eta_r^1\beta_{r+1}^0+\sum_{s=2}^r\pi_{s,t}^\perp(\beta_{s+1}^0(t))-t\sum_{s=2}^{r}A_s^{r}\pi_{s}^\perp(\beta_{s+1}^0)=-\sum_{s=3}^{r}t^{s-1-r}\eta_r^{r-s+2,r-s}\pi_s^\perp\beta_{s}^0\]

\end{proof}

\bigskip

\textit{Proof of Lemma \ref{Lemma:LemmaOfEquation:FirstHypothesisEquationVersionTwo}.}

Using \eqref{Equation:Relations1:Equation1} and \eqref{Equation:Relations1:Equation2}, we notice that for any collection $(V_m,...,V_{m'})$ of vectors,\renewcommand{\arraystretch}{1.3}

\begin{equation}\label{Equation:SecondHypothesisEquationVersionOne}
\begin{array}{ll}\eta_r^0(\beta_{r+1}^1(V_m,...,V_{m'}))&=\hat{\beta}_{r+1}^1(t)(V_m,...,V_{m'})\\
~&+\pi_{2,t}^\perp\big(\hat{\beta}_2^1(t)(V_m,...,V_{m'})\big)+...+\pi_{r,t}^\perp\big(\hat{\beta}_r^1(t)(V_m,...,V_{m'})\big)\\
~&-A_2^{r}\pi_2^\perp\big(\beta_2^1(V_m,...,V_{m'})\big)-...-A_{r}^r\pi_r^\perp\big(\beta_r^1(V_m,...,V_{m'}\big)\end{array}
\end{equation}\renewcommand{\arraystretch}{1.0}

or, more succinctly,

\begin{equation}\label{Equation:SecondHypothesisEquationVersionTwo}
\eta_r^0(\beta_{r+1}^1)=\hat{\beta}_{r+1}^1(t)+\sum_{s=2}^{r}\pi_{s,t}^\perp\hat{\beta}_s^1(t)-\sum_{s=2}^{r}A_s^{r}\pi_s^\perp\beta_s^1.
\end{equation}

We have that
\[\eta_{r+1}=(\pi_{r+1,t}+\lambda^{-1}\pi_{r+1,t}^\perp)\eta_{r}(\pi_{r+1,t}+\lambda t\pi_{r+1,t}^\perp)\]
Since $\eta_r$ has no negative powers of $\lambda$,

\[\eta_{r+1}^0=\pi_{r+1,t}\eta_r^0\pi_{r+1}+\pi_{r+1,t}^\perp\eta_r^1\pi_{r+1}+t\pi_{r+1,t}^\perp\eta_r^0\pi_{r+1}^\perp\]

On the other hand, from \eqref{Equation:FirstHypothesisEquationVersionTwo}, we may conclude that $\eta_{r}^0$ maps $\alpha_{r+1}$ into $\alpha_{r+1}(t)$ so that \[\pi_{r+1,t}^\perp\eta_r^0\pi_{r+1}=0.\]

Now, fix a collection $(V_m,...,V_{m+r+1})$ of $\mathbb{C}^n$ vectors. Then,\renewcommand{\arraystretch}{1.3}

\smallskip

$
\begin{array}{ll}~&\eta_{r+1}^0\big(\beta_{r+2}^0\big)=\big(\pi_{r+1,t}\eta_r^0\pi_{r+1}+\pi_{r+1,t}^\perp\eta_r^1\pi_{r+1}+t\pi_{r+1,t}^\perp\eta_r^0\pi_{r+1}^\perp\big)\big(\beta_{r+1}^0+\pi_{r+1}^\perp\beta_{r+1}^1\big)\,\text{\tiny{(see \eqref{Equation:Relations2:Equation1})}}\\
=&\pi_{r+1,t}\eta_r^0\pi_{r+1}(\beta_{r+1}^0)+\pi_{r+1,t}^\perp\eta_r^1\pi_{r+1}(\beta_{r+1}^0)+t\pi_{r+1,t}^\perp\eta_r^0\pi_{r+1}^\perp(\beta_{r+1}^0+\beta_{r+1}^1)\\
=&\eta_r^0\pi_{r+1}(\beta_{r+1}^0)-\pi_{r+1,t}^\perp\eta_r^1\pi_{r+1}^\perp(\beta_{r+1}^0)+\pi_{r+1,t}^\perp\eta_r^1(\beta_{r+1}^0)+t\pi_{r+1,t}^\perp\eta_r^0(\beta_{r+1}^0+\beta_{r+1}^1)\\
=&\eta_r^0(\beta_{r+1}^0)-(\eta_r^0+\pi_{r+1,t}^\perp\eta_r^1)\pi_{r+1}^\perp(\beta_{r+1}^0)+\pi_{r+1,t}^\perp\eta_r^1(\beta_{r+1}^0)+t\pi_{r+1,t}^\perp\eta_r^0(\beta_{r+1}^0+\beta_{r+1}^1)\\
=&\displaystyle{\beta_{r+1}^0(t)+\sum_{s=2}^r\pi_{s,t}^\perp\beta_s^0(t)-\sum_{s=2}^{r}A_s^{r}\pi_s^\perp\beta_s^0-A_{r+1}^{r+1}\pi_{r+1}^\perp\beta_{r+1}^0}+\pi_{r+1,t}^\perp\big(\eta_r^1\beta_{r+1}^0+t\eta_r^0(\beta_{r+1}^0+\beta_{r+1}^1)\big)
\end{array}
$\renewcommand{\arraystretch}{1.0}

\smallskip
where we have used the induction hypothesis. We can proceed to get
\smallskip

$
\begin{array}{ll}
=&\displaystyle{\beta_{r+1}^0(t)+\sum_{s=2}^r\pi_{s,t}^\perp\beta_s^0(t)-\sum_{s=2}^{r}A_s^{r}\pi_s^\perp\beta_s^0-A_{r+1}^{r+1}\pi_{r+1}^\perp\beta_{r+1}^0}\\
~&+\pi_{r+1,t}^\perp\big(\eta_r^1\beta_{r+1}^0+t\eta_r^0(\beta_{r+1}^0+\beta_{r+1}^1)\big)-\pi_{r+1,t}^\perp\beta_{r+1}^1(t)+\pi_{r+1,t}^\perp\beta_{r+1}^1(t)
\end{array}
$

\smallskip

Now, using \eqref{Equation:Relations2:Equation2}, \eqref{Equation:FirstHypothesisEquationVersionTwo}) and \eqref{Equation:SecondHypothesisEquationVersionTwo}, this expression becomes

\smallskip

$
\begin{array}{ll}
=&\displaystyle{\beta_{r+2}^0(t)+\sum_{s=2}^r\pi_{s,t}^\perp\beta_s^0(t)-\sum_{s=2}^{r}A_s^{r}\pi_s^\perp\beta_s^0-A_{r+1}^{r+1}\pi_{r+1}^\perp\beta_{r+1}^0}\\
~&\displaystyle{+\pi_{r+1,t}^\perp\big(\eta_r^1\beta_{r+1}^0+t\big(\beta_{r+1}^0(t)+\hat{\beta}_{r+1}^1(t)+\sum_{s=2}^r\pi_{s,t}^\perp(\beta_s^0(t)+\hat{\beta}_s^1(t))}\\
~&\displaystyle{-\sum_{s=2}^{r}A_s^{r}\pi_s^\perp(\beta_s^0+\beta_s^1)\big)-\beta_{r+1}^1(t)\big)}\\
\end{array}
$

\smallskip

Using \eqref{Equation:EquationIn:Lemma:LemmaForTheFirstPart}, \eqref{Equation:EquationIn:Corollary:AfterLemmaForTheFirstPart} and \eqref{Equation:Relations2:Equation1} we get

\smallskip

$
\begin{array}{ll}
=&\displaystyle{\beta_{r+2}^0(t)+\sum_{s=2}^r\pi_{s,t}^\perp\beta_s^0(t)-\sum_{s=2}^{r}A_s^{r}\pi_s^\perp\beta_s^0-A_{r+1}^{r+1}\pi_{r+1}^\perp\beta_{r+1}^0}\\
~&\displaystyle{+\pi_{r+1,t}^\perp\big(\eta_r^1\beta_{r+1}^0+\beta_{r+1}^0(t)+\sum_{s=2}^r\pi_{s,t}^\perp(\beta_{s+1}^0(t))-t\sum_{s=2}^{r}A_s^{r}\pi_s^\perp(\beta_{s+1}^0)\big)}\\
=&\displaystyle{\beta_{r+2}^0(t)+\sum_{s=2}^r\pi_{s,t}^\perp\beta_s^0(t)+\pi_{r+1,t}^\perp\beta_{r+1}^0(t)-\sum_{s=2}^{r}A_s^{r}\pi_s^\perp\beta_s^0-A_{r+1}^{r+1}\pi_{r+1}^\perp\beta_{r+1}^0}\\
~&\displaystyle{+\pi_{r+1,t}^\perp\big(\eta_r^1\beta_{r+1}^0+\sum_{s=2}^r\pi_{s,t}^\perp(\beta_{s+1}^0(t))-t\sum_{s=2}^{r}A_s^{r}\pi_s^\perp(\beta_{s+1}^0)\big)}\\
\end{array}
$

\smallskip

Using \eqref{Equation:ThirdHypothesisEquationVersionTwo},

\smallskip

$
\begin{array}{ll}
=&\displaystyle{\beta_{r+2}^0(t)+\sum_{s=2}^{r+1}\pi_{s,t}^\perp\beta_s^0(t)-\sum_{s=2}^{r}A_s^{r}\pi_s^\perp\beta_s^0-A_{r+1}^{r+1}\pi_{r+1}^\perp\beta_{r+1}^0+\pi_{r+1,t}^\perp\big(-\sum_{s=3}^{r}t^{s-1-r}\eta_r^{r-s+2,r-s}\pi_s^\perp\beta_{s}^0\big)}\\
\end{array}
$

\smallskip

and, re-organizing terms:

\smallskip

$
\begin{array}{ll}
=&\displaystyle{\beta_{r+2}^0(t)+\sum_{s=2}^{r+1}\pi_{s,t}^\perp\beta_s^0(t)-A_2^{r}\pi_2^\perp\beta_2^0}\\
~&-(A_3^r+t^{2-r}\pi_{r+1,t}^\perp\eta_r^{r-1,r-3})\pi_3^\perp\beta_3^0-...-(A_{r}^r+t^{-1}\pi_{r+1,t}^\perp\eta_r^{2,0})\pi_r^\perp\beta_r^0-A_{r+1}^{r+1}\pi_{r+1}^\perp\beta_{r+1}^0\\
=&\displaystyle{\beta_{r+2}^0(t)+\sum_{s=2}^{r+1}\pi_{s,t}^\perp\beta_s^0(t)-\sum_{s=2}^{r+1}A_s^{r+1}\pi_s^\perp\beta_s^0},\\
\end{array}
$

\smallskip

completing the proof.

\qed

We conclude by proving \eqref{Equation:AuxiliaryCalculusEq3}:

Since we may use \eqref{Equation:FirstHypothesisEquationVersionTwo} applied to $\beta_r^0$, we can write
\smallskip

$
\begin{array}{ll}
~&\displaystyle{\pi_{r,t}^\perp\beta_{r+1}^0(t)=\pi_{r,t}^\perp\big(\eta_r^0(\beta_{r+1}^0)-\sum_{s=2}^r\pi_{s,t}^\perp\beta_s^0(t)+\sum_{s=2}^{r}A_s^{r}\pi_s^\perp\beta_s^0\big)}\\

\end{array}
$
\smallskip

Using \eqref{Equation:DefinitionOfArk},

\smallskip

$
\begin{array}{ll}
=&\displaystyle{\pi_{r,t}^\perp\big(\eta_r^0(\beta_{r+1}^0)-\sum_{s=2}^{r-1}\pi_{s,t}^\perp\beta_s^0(t)-\pi_{r,t}^\perp(\eta_{r-1}^0(\beta_{r}^0)-\sum_{s=2}^{r-1}\pi_{s,t}^\perp\beta_s^0(t)+\sum_{s=2}^{r-1}A_s^{r-1}\pi_s^\perp\beta_s^0)+}\\
~&\displaystyle{+A_2^{r-1}\pi_2^\perp\beta_2^0+\sum_{s=3}^{r-1}(A_s^{r-1}+t^{s-r}\pi_{r,t}^\perp\eta_{r-1}^{r-s+1,r-s-1})\pi_{s}^\perp\beta_s^0+(\eta_{r-1}^0+\pi_{r,t}^\perp\eta_{r-1}^1)\pi_r^\perp\beta_r^0\big)}\\
\end{array}
$

$
\begin{array}{ll}
=&\displaystyle{\pi_{r,t}^\perp\big(\eta_r^0(\beta_{r+1}^0)-\eta_{r-1}^0(\beta_{r}^0)+\sum_{s=3}^{r-1}t^{s-r}\eta_{r-1}^{r-s+1,r-s-1}\pi_{s}^\perp\beta_s^0+(\eta_{r-1}^0+\eta_{r-1}^1)\pi_r^\perp\beta_r^0\big)}\\
=&\displaystyle{\pi_{r,t}^\perp\big((\eta_{r-1}^1\pi_r+t\eta_{r-1}^0\pi_r^\perp)\beta_{r+1}^0-\eta_{r-1}^0(\beta_{r}^0)+\sum_{s=3}^{r-1}t^{s-r}\eta_{r-1}^{r-s+1,r-s-1}\pi_{s}^\perp\beta_s^0+(\eta_{r-1}^0+\eta_{r-1}^1)\pi_r^\perp\beta_r^0\big)}\\
\end{array}
$

$
\begin{array}{ll}
=&\displaystyle{\pi_{r,t}^\perp\big(t\eta_{r-1}^0\pi_r^\perp\beta_{r+1}^0+\eta_{r-1}^1\pi_r\beta_r^0-\eta_{r-1}^0(\beta_{r}^0)+\sum_{s=3}^{r-1}t^{s-r}\eta_{r-1}^{r-s+1,r-s-1}\pi_{s}^\perp\beta_s^0+(\eta_{r-1}^0+\eta_{r-1}^1)\pi_r^\perp\beta_r^0\big)}\\
=&\displaystyle{\pi_{r,t}^\perp\big(t\eta_{r-1}^0\pi_r^\perp\beta_{r+1}^0+\eta_{r-1}^1\beta_r^0-\eta_{r-1}^0\pi_r(\beta_{r}^0)+\sum_{s=3}^{r-1}t^{s-r}\eta_{r-1}^{r-s+1,r-s-1}\pi_{s}^\perp\beta_s^0\big)}.
\end{array}
$

\qed



\begin{thebibliography}{99}

\bibitem{BurstallGuest:97} F.~E.\ Burstall and M.~A.\ Guest, \textit{Harmonic two-spheres in compact symmetric spaces, revisited}, Math. Ann. 309 (1997) 541--572.

\bibitem{DPW} J. Dorfmeister, F. Pedit and H. Wu, \emph{Weierstrass representations of harmonic maps into symmetric spaces}, Comm. Anal. Geom. 6(4) (1998), 633--668.

\bibitem{EschenburgQuast} J.~H. Eschenburg and P.~Quast, \emph{The spectral parameter of pluriharmonic maps}, Bull. London Math. Soc. {\bf 42} (2010), 229--236.

\bibitem{EschenburgTribuzy} J.~H. Eschenburg and R.~Tribuzy, \emph{Associated families of pluriharmonic maps and isotropy}, Manuscripta Math. {\bf 95} (1998), 295--310.

\bibitem{FerreiraSimoesWood:09} M.~J. Ferreira, B.~A Simões and J.~C. Wood \emph{All harmonic $2$-spheres in the unitary group, completely explicitly}, Math. Z. {\bf 266} (2010), 953--978.

\bibitem{FerreiraSimoes:12} M.~J. Ferreira and B.~A. Simões, \emph{Explicit construction of harmonic two-spheres into the complex Grassmannian}, Math. Z., {\bf 272} (2012), 151--174.

\bibitem{PressleySegal:86} A.\ Pressley and G.\ Segal, \textit{Loop groups}, Oxford Mathematical Monographs, Oxford Science Publications, The Clarendon Press, Oxford University Press, Oxford, 1986.

\bibitem{Segal:89} G.~Segal, \emph{Loop groups and harmonic maps}, Advances in homotopy theory (Cortona, 1988), 153--164, London Math. Soc. Lecture Note Ser., 139, Cambridge Univ. Press, Cambridge, 1989.

\bibitem{Uhlenbeck:89} K.~Uhlenbeck, \emph{Harmonic maps into Lie groups: classical solutions of the chiral model}, J. Differential Geom. \textbf{30} (1989), 1--50.

\end{thebibliography}
\end{document}